\newtheorem{thm}{Theorem}[section]
\newtheorem{lem}[thm]{Lemma}
\newtheorem{cor}[thm]{Corollary}
\newtheorem{thmA}{Theorem}
\theoremstyle{definition}
\theoremstyle{remark}
\numberwithin{equation}{section}
\newcommand{\NM}{\vartriangleleft}
\newcommand{\X}{\mathfrak{X}}
\DeclareMathOperator{\Irr}{Irr}
\DeclareMathOperator{\Syl}{Syl}
\DeclareMathOperator{\dz}{dz}
\DeclareMathOperator{\rdz}{rdz}
\begin{document}

\title{Character triples and relative defect zero characters}

\author{Junwei Zhang}
\address{School of Mathematical Sciences, Shanxi University, Taiyuan, 030006, China.}
\email{zhangjunwei@sxu.edu.cn}
\email{jinping@sxu.edu.cn}

\author{Lizhong Wang}
\address{School of Mathematics, Peking University, Beijing, 100871, China.}
\email{lwang@math.pku.edu.cn}

\author{Ping Jin*}

\thanks{*Corresponding author}

\keywords{character triple, cohomology element, $\pi$-quasi extension, $\pi$-defect zero character,
relative $\pi$-defect zero character.}

\date{}

\maketitle

\begin{abstract}
Given a character triple $(G,N,\theta)$, which means that $G$ is a finite group with $N\NM G$ and $\theta\in\Irr(N)$ is $G$-invariant, we introduce the notion of a $\pi$-quasi extension of $\theta$ to $G$
where $\pi$ is the set of primes dividing the order of the cohomology element $[\theta]_{G/N}\in H^2(G/N,\mathbb{C}^\times)$ associated with the character triple, and then establish the uniqueness of such an extension in the normalized case.
As an application, we use the $\pi$-quasi extension of $\theta$ to construct a bijection from the set of $\pi$-defect zero characters of $G/N$ onto the set of relative $\pi$-defect zero characters of $G$ over $\theta$.
Our results generalize the related theorems of M. Murai and of G. Navarro.
\end{abstract}

\section{Introduction}

Let $G$ be a finite group, $N$ a normal subgroup of $G$ and $\theta\in\Irr(N)$ a $G$-invariant irreducible complex character of $N$.
In this situation, we say that $(G,N,\theta)$ is a \emph{character triple},
and it is known that this triple is associated with a cohomology element $[\theta]_{G/N}\in H^2(G/N,\mathbb{C}^\times)$,
which can be used to determine whether or not $\theta$ is extendible to $G$.
Specifically, $\theta$ extends to $G$ if and only if  $[\theta]_{G/N}$ is trivial,
and this happens precisely when $\theta$ extends to $P$ for every Sylow subgroup $P/N$ of $G/N$.
(See, for example, Theorem 11.7 and Corollary 11.31 of \cite{I1976}).

To measure the extent to which $\theta$ extends to $G$, it is natural to investigate
the set $\pi$ of primes, which consists of those primes $p$ with the property that $\theta$
is not extendible to $P$ for some $P/N\in\Syl_p(G/N)$.
Of course, the prime set $\pi$ is not empty if and only if $\theta$ does not extend to $G$.
It is easy to show that $\pi$ is exactly the set of prime divisors of the order of $[\theta]_{G/N}$,
and furthermore, we will construct a special class function of $G$ which behaves
(in some respects, at least) as if it is an extension of $\theta$ to $G$,
as indicated in the following.

\begin{thmA}
Let $(G,N,\theta)$ be a character triple, and let $\pi$ be a set of primes.
Then the following are equivalent.

{\rm (a)} The order of $[\theta]_{G/N}\in H^2(G/N,\mathbb{C}^\times)$ is a $\pi$-number.

{\rm (b)}  For each prime $p\notin\pi$, $\theta$ extends to $P$ for some $P/N\in\Syl_{p}(G/N)$.

{\rm (c)} There exists a complex-valued class function $\tilde\theta$ of $G$ such that
for each $\pi'$-subgroup $H/N$ of $G/N$,
the restriction $\tilde\theta_H$ of $\tilde\theta$ to $H$ is an extension (as a character) of $\theta$ to $H$.
\end{thmA}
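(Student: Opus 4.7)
I would establish the cycle $(\mathrm{c}) \Rightarrow (\mathrm{b}) \Rightarrow (\mathrm{a}) \Rightarrow (\mathrm{c})$. The implication $(\mathrm{c}) \Rightarrow (\mathrm{b})$ is immediate: for $p \notin \pi$ any Sylow $p$-subgroup $P/N$ of $G/N$ is a $\pi'$-group, so by (c), $\tilde\theta_P$ is an extension of $\theta$ to $P$. For $(\mathrm{b}) \Rightarrow (\mathrm{a})$, Corollary 11.31 of \cite{I1976} translates the hypothesis into the vanishing of the restriction of $[\theta]_{G/N}$ along $P/N \hookrightarrow G/N$ in $H^2(P/N, \mathbb{C}^\times)$ for each $p \notin \pi$; since restriction to a Sylow $p$-subgroup is injective on the $p$-primary component of $H^2(G/N, \mathbb{C}^\times)$ (corestriction is a one-sided inverse up to the $p'$-index $[G:P]$), the $p$-primary part of $[\theta]_{G/N}$ is trivial for every $p \notin \pi$, which is exactly (a).

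The substantive direction is $(\mathrm{a}) \Rightarrow (\mathrm{c})$. Put $n = |[\theta]_{G/N}|$, a $\pi$-number. Using the standard projective representation attached to the character triple I would choose $\mathcal{P}\colon G \to \GL_{\theta(1)}(\mathbb{C})$ with $\mathcal{P}|_N$ affording $\theta$, satisfying $\mathcal{P}(ng) = \mathcal{P}(n)\mathcal{P}(g)$ and $\mathcal{P}(gn) = \mathcal{P}(g)\mathcal{P}(n)$ for $n \in N$, $g \in G$, and whose factor set $\alpha$ descends to $G/N \times G/N$ and takes values in the group $\mu_n$ of $n$-th roots of unity. The last normalization is possible because $[\alpha]^n = 1$ in $H^2(G/N, \mathbb{C}^\times)$, so rescaling $\mathcal{P}$ by a suitable $\mathbb{C}^\times$-valued function trivial on $N$ (obtained by pointwise extracting an $n$-th root of a trivializing $1$-cochain of $\alpha^n$) drives the factor set into $\mu_n$.

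For every $H$ with $N \leq H \leq G$ and $H/N$ a $\pi'$-group, the restricted cocycle $\alpha|_{H/N}$ lies in $Z^2(H/N, \mu_n)$. Since $\gcd(|H/N|, n) = 1$, the standard coprime-order vanishing gives $H^1(H/N, \mu_n) = H^2(H/N, \mu_n) = 1$, so there is a unique $\nu_H \colon H/N \to \mu_n$ with $d\nu_H = \alpha|_{H/N}$ and $\nu_H(1) = 1$. Then $\mathcal{P}'_H(h) := \nu_H(hN)^{-1}\mathcal{P}(h)$ is an ordinary representation of $H$ whose character $\chi_H$ extends $\theta$ (this uses $\nu_H(N) = 1$). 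Uniqueness of $\nu_H$ yields immediately the restriction compatibility $\chi_K = \chi_H|_K$ whenever $N \leq K \leq H$ with both quotients $\pi'$, as well as $G$-conjugation equivariance $\chi_{H^x}(g^x) = \chi_H(g)$, using the $G$-invariance of $\theta$. Setting
\[
\tilde\theta(g) \,:=\, \begin{cases} \chi_{\langle g, N\rangle}(g) & \text{if } gN \text{ is a } \pi'\text{-element of } G/N, \\ 0 & \text{otherwise}, \end{cases}
\]
produces a complex-valued class function on $G$ whose restriction to any $\pi'$-subgroup $H/N$ of $G/N$ equals $\chi_H$, as desired.

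The main obstacle is the canonical trivialization step: the coprimality $\gcd(|H/N|, n) = 1$ must be exploited to secure \emph{simultaneously} the existence of $\nu_H$, its uniqueness, and the restriction-and-conjugation compatibilities that permit the family $\{\chi_H\}$ to be glued into a single class function on $G$. The use of the $\mu_n$-valued factor set is essential: any two would-be trivializations differ by a homomorphism $H/N \to \mu_n$, and the coprimality forces such a homomorphism to be trivial, which is precisely what makes the assembly of $\tilde\theta$ unambiguous.
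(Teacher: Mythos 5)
Your implications (c)$\Rightarrow$(b) and (b)$\Rightarrow$(a) are fine and are essentially the paper's own argument (the restriction--corestriction step is Lemma 2.2(b) there), and your overall mechanism for (a)$\Rightarrow$(c) -- drive the factor set into $\mu_n$ with $n=|[\theta]_{G/N}|$ a $\pi$-number and exploit coprimality with $|H/N|$ -- is the same engine the paper uses. The gap is at the point where you assert that uniqueness of $\nu_H$ "immediately" yields the conjugation equivariance $\chi_{H^x}(g^x)=\chi_H(g)$. Uniqueness does give restriction compatibility, since $\nu_H|_{K/N}$ is again a $\mu_n$-valued cochain with coboundary $\bar\alpha|_{K/N}$. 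But conjugation does not fix the cocycle: transporting $\nu_H$ to $H^x/N$ by $\bar k\mapsto\nu_H(\bar x\bar k\bar x^{-1})$ gives a cochain whose coboundary is the conjugated cocycle $(\bar k_1,\bar k_2)\mapsto\bar\alpha(\bar x\bar k_1\bar x^{-1},\bar x\bar k_2\bar x^{-1})$, which need not coincide pointwise with $\bar\alpha|_{H^x/N}$, only up to a coboundary. So the two cochains you want to compare do not trivialize the same cocycle, and the "two trivializations differ by a homomorphism into $\mu_n$" argument does not apply as stated. Since the class-function property of $\tilde\theta$ is precisely what (c) demands beyond extendibility on each $\pi'$-subgroup, this step is the crux and cannot be left implicit.

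The step is repairable. For instance, from $\mathcal{P}(g)\mathcal{P}(h)=\alpha(g,h)\mathcal{P}(gh)$ one gets $\mathcal{P}(h^x)=c\,\mathcal{P}(x)^{-1}\mathcal{P}(h)\mathcal{P}(x)$ with $c$ a product of values of $\alpha$, hence $c\in\mu_n$; therefore $h\mapsto\chi_{H^x}(h^x)$ is an extension of $\theta$ to $H$ (here the $G$-invariance of $\theta$ really enters) which differs from $\chi_H$ by a $\mu_n$-valued factor wherever $\mathrm{tr}\,\mathcal{P}$ is nonzero. By Gallagher the two extensions differ by a linear character of the cyclic $\pi'$-group $H/N$; its value at the generator is simultaneously a $\pi'$-root of unity and an $n$-th root of unity (one needs that an extension of $\theta$ to a group with cyclic quotient over $N$ does not vanish identically on the generating coset, e.g. Isaacs, Lemma 8.14), hence the character is trivial. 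Alternatively, write down the explicit $\mu_n$-valued $1$-cochain witnessing that inner automorphisms act trivially on $H^2$ and rerun your uniqueness argument. The paper sidesteps this entirely by passing to the central extension $\hat G$ of Lemma 2.3, where $\hat\lambda^{-1}\hat\theta$ has a genuine extension $\tau\in\Irr(\hat G)$, and setting $\tilde\theta(g)=\hat\lambda(x_\pi)\tau(x)$; the class-function property is then automatic because $\tau$ is a character of the whole group and $\hat\lambda$ is $\hat G$-invariant, with Gallagher used only to verify the extension property on each $\pi'$-subgroup. Your route avoids the auxiliary group but must pay for it exactly at the conjugation step; once that argument is supplied, your proof is correct.
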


In the situation of Theorem A, by a powerful theorem of Schmid (Theorem B of \cite{S1985}),
we know that the order of $[\theta]_{G/N}$ always divides each of ${\hat o}(\theta)$,
$\theta(1)o(\theta)$ and $|N|/\theta(1)$. Here ${\hat o}(\theta)$ is the number of distinct roots of unity
contained in $\mathbb{Q}(\theta)$ and $o(\theta)$ is the determinantal order of $\theta$. In particular,
if $|N|/\theta(1)$ is a $\pi$-number, then the order of $[\theta]_{G/N}$ is necessarily a $\pi$-number,
so that Theorem A applies.

Furthermore, we call the class function $\tilde\theta$ in Theorem A(c) a \emph{$\pi$-quasi extension} of $\theta$ to $G$.
Observe that if $H/N$ is a $\pi'$-subgroup of $G/N$, then the $\pi$-part $x_\pi$ of each element $x\in H$ lies in $N$,
so it does not matter which values of $\tilde\theta(x)$ will take for all $x\in G$ with $x_\pi\notin N$.
To establish the uniqueness of $\pi$-quasi extensions, therefore,  we need to modify
the values of $\tilde\theta$.
We say that a $\pi$-quasi extension of $\theta$ to $G$ is \emph{normalized}
if it vanishes on elements $x$ of $G$ with $x_\pi\notin N$,
and we prove that the normalized $\pi$-quasi extension, if exists, is unique up to multiplication by linear characters.

\begin{thmA}\label{unique}
Let $(G,N,\theta)$ be a character triple, and suppose that $\varphi$ and $\varphi'$ are normalized $\pi$-quasi extensions of $\theta$ to $G$. Then there exists a linear character $\lambda$ of $G/N$ such that $\varphi'=\lambda\varphi$.
\end{thmA}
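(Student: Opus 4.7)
The plan is to compare $\varphi$ and $\varphi'$ on each $\pi'$-subgroup of $G/N$ via Gallagher's theorem, assemble the resulting local linear characters into a coherent family, and then promote the pointwise ratio $\varphi'/\varphi$ to a single linear character of $G/N$ that works globally.

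For each $\pi'$-subgroup $\bar H := H/N$ of $\bar G := G/N$, both restrictions $\varphi_H$ and $\varphi'_H$ are irreducible extensions of $\theta$ to $H$ by the defining property of a $\pi$-quasi extension, so Gallagher's theorem furnishes a unique linear character $\mu_{\bar H}\in\Lin(\bar H)$, viewed as a character of $H$ trivial on $N$, with $\varphi'_H=\mu_{\bar H}\varphi_H$. Because $\varphi$ and $\varphi'$ are genuine class functions on the whole of $G$, restricting this identity along nested $\pi'$-subgroups $\bar H_1\le\bar H_2$ gives $\mu_{\bar H_1}=\mu_{\bar H_2}|_{\bar H_1}$, and conjugating it in $G$ gives $\mu_{g\bar Hg^{-1}}=(\mu_{\bar H})^g$. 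Thus $\{\mu_{\bar H}\}$ is a coherent family under both restriction and $\bar G$-conjugation.

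I then define $\lambda_0(g):=\mu_{\langle g\rangle}(g)$ on the set of $\pi'$-elements of $\bar G$. The coherence gives $\lambda_0|_{\bar H}=\mu_{\bar H}$ for every $\pi'$-subgroup $\bar H$; in particular $\lambda_0$ is constant on $\bar G$-conjugacy classes of $\pi'$-elements, multiplicative on commuting pairs, and takes values in the $\pi'$-power roots of unity. The core task is to build a single $\lambda\in\Lin(\bar G)$ with $\lambda(g)=\lambda_0(g)$ for every $\pi'$-element $g$. I plan to do this by first showing that $\lambda_0$ descends to a homomorphism on the $\pi'$-part of $\bar G^{\mathrm{ab}}$---each $\mu_{\bar H}$ already kills $[\bar H,\bar H]$, and the conjugation-compatibility together with a transfer-type argument handles the commutators arising from outside any fixed $\bar H$---and then extending arbitrarily to the $\pi$-part of $\bar G^{\mathrm{ab}}$ before inflating back to $\bar G$. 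Any linear character of $\bar G$ of $\pi'$-order vanishes on $\pi$-elements, so $\lambda$ is uniquely determined on $\pi'$-elements and may be taken to be trivial on the $\pi$-part.

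Once $\lambda$ is in hand, the identity $\varphi'=\lambda\varphi$ follows at once: for $x\in G$ with $x_\pi\in N$ the coset $xN$ is a $\pi'$-element of $\bar G$ lying in $\langle xN\rangle$, so $\varphi'(x)=\mu_{\langle xN\rangle}(xN)\,\varphi(x)=\lambda(xN)\varphi(x)$; and for $x$ with $x_\pi\notin N$, both sides vanish by the normalization hypothesis. The main obstacle is the extension step for $\lambda_0$: in general a coherent system of linear characters on subgroups need not glue into a linear character of the whole group, since extending a conjugation-invariant linear character from a normal subgroup carries a cohomological obstruction. The key point forcing it to vanish here is that our family arises from two class functions defined on all of $G$, rather than from a bare system of subgroup data, and this is what one must leverage when carrying out the descent to $\bar G^{\mathrm{ab}}$.
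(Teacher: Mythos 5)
Your setup through the coherent family $\{\mu_{\bar H}\}$ and the function $\lambda_0$ on the $\pi'$-elements of $\bar G=G/N$ agrees with the paper's proof, and the final deduction of $\varphi'=\lambda\varphi$ from the existence of $\lambda$ is fine. The gap is the gluing step, which you yourself call the main obstacle and then only sketch. The coherence properties you establish (restriction-compatibility inside each $\pi'$-subgroup, conjugation-invariance, multiplicativity on commuting $\pi'$-elements) do not yield the claim that $\lambda_0$ ``descends to a homomorphism on the $\pi'$-part of $\bar G^{\mathrm{ab}}$'': that claim requires $\lambda_0$ to take equal values on $\pi'$-elements whose images in $\bar G^{\mathrm{ab}}$ coincide, and to be multiplicative on such images, whereas two $\pi'$-elements congruent modulo $[\bar G,\bar G]$ need not lie in, or be conjugate into, a common $\pi'$-subgroup (there is in general no Hall $\pi'$-subgroup to work inside). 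That statement is essentially equivalent to the theorem itself, and the phrase ``a transfer-type argument handles the commutators arising from outside any fixed $\bar H$'' is not an argument: nothing in the proposal identifies the cohomological obstruction you mention, let alone shows that it vanishes.

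For the record, a transfer argument can be completed, but it has to be set up differently from what you describe: for each prime $p\notin\pi$ take $P\in\Syl_p(\bar G)$; the class-function property makes $\mu_P$ a $\bar G$-stable linear character of $P$, so composing with the transfer map $\bar G\to P/[P,P]$ and raising to a power inverse to $|\bar G:P|$ modulo the order of $\mu_P$ gives a linear character $\lambda_p$ of $\bar G$ of $p$-power order whose restriction to $P$ is $\mu_P$; then $\lambda=\prod_{p\notin\pi}\lambda_p$ agrees with $\lambda_0$ on $\pi'$-elements, since $\lambda_q$ kills $p$-elements for $p\neq q$ and $\lambda_0$ is multiplicative on the commuting $p$-parts of a $\pi'$-element. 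None of this is in your write-up. The paper avoids transfer entirely: it extends $\hat\mu$ to all of $\bar G$ by setting $\lambda(y)=\hat\mu(y_{\pi'})$, shows via Brauer's characterization of generalized characters that $[\lambda_E,\eta]=[1_A,\alpha][\hat\mu_B,\beta]\in\mathbb{Z}$ for every nilpotent subgroup $E=A\times B$ of $\bar G$, and then concludes from $[\lambda,\lambda]=1$ and $\lambda(1)=1$ that $\lambda$ is a linear character. Either route must supply exactly the global input that your proposal leaves open, so as written the proof is incomplete at its decisive point.
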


We continue to assume that $(G,N,\theta)$ is a character triple and $\pi$ is a set of primes.
Given a character $\chi\in\Irr(G)$ that lies over $\theta$, we say that $\chi$ has \emph{relative $\pi$-defect zero} over $\theta$
if $$(\chi(1)/\theta(1))_\pi=|G/N|_\pi,$$
and we write $\rdz_\pi(G|\theta)$ for the set of relative $\pi$-defect zero characters of $G$ over $\theta$.
Also, a character $\psi\in\Irr(G)$ has \emph{$\pi$-defect zero} if $\psi(1)_\pi=|G|_\pi$,
which is equivalent to saying that $\psi$ has $p$-defect zero for each prime $p\in\pi$,
and we use $\dz_\pi(G)$ to denote the set of $\pi$-defect zero characters of $G$.
Note that if $\theta=1_N$ is the trivial character of $N$, then $\dz_\pi(G/N)=\rdz_\pi(G|1_N)$.
We mention that (relative) $p$-defect zero characters play an important role when studying the Alperin weight conjecture;
see Theorems 2.4 and 5.1 of \cite{NT2011} for instance.

As an application of Theorem A, we prove the following.
\begin{thmA}
Let $(G,N,\theta)$ be a character triple,
and suppose that $\tilde\theta$ is a $\pi$-quasi extension of $\theta$ to $G$,
where $\pi$ is a set of primes.
Then the map $\chi\mapsto \tilde{\theta}\chi$ defines a bijection from $\dz_\pi(G/N)$ onto $\rdz_\pi(G|\theta)$. 
\end{thmA}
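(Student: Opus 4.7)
The plan is to combine an orthogonality identity for the normalized $\pi$-quasi extension with an explicit Gallagher-type inversion, and then upgrade the resulting norm-one class functions to irreducible characters. Since every $\chi\in\dz_\pi(G/N)$ vanishes on $\pi$-singular elements of $G/N$, replacing $\tilde\theta$ by its normalization does not affect the product $\tilde\theta\chi$, so I may assume $\tilde\theta(x)=0$ whenever $x_\pi\notin N$. The technical heart will be the local identity
\[
\sum_{n\in N}|\tilde\theta(tn)|^{2}=|N|
\]
for every $t\in G$ with $tN$ a $\pi'$-element of $G/N$. To prove it I would restrict $\tilde\theta$ to $H:=\langle t\rangle N$; since $H/N$ is cyclic and $\pi'$, the restriction $\xi:=\tilde\theta|_{H}$ is an irreducible character of $H$ extending $\theta$. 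Realizing $\xi$ by a unitary matrix representation $\mathcal{X}$ and expanding $|\xi(tn)|^{2}$ in matrix coefficients, Schur orthogonality applied to $\mathcal{X}|_{N}$ (which affords the irreducible $\theta$) will collapse the sum to $\tfrac{|N|}{\theta(1)}\operatorname{tr}(\mathcal{X}(t)\mathcal{X}(t)^{-1})=|N|$.

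From this identity together with the normalization, a direct computation yields the isometry $[\tilde\theta\chi,\tilde\theta\chi']_{G}=[\chi,\chi']_{G/N}$ for all $\chi,\chi'\in\dz_\pi(G/N)$. In particular $\tilde\theta\chi$ is a norm-one class function on $G$ whose restriction to $N$ equals $\chi(1)\theta$, which vanishes on elements $x$ with $xN$ $\pi$-singular, and whose value at $1$ is $\theta(1)\chi(1)$. For the inverse map I would first establish that every $\psi\in\rdz_\pi(G|\theta)$ also vanishes on elements $x$ with $xN$ $\pi$-singular---the relative-defect analog of the classical fact that $p$-defect zero characters vanish on $p$-singular classes. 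Then for each $\pi'$-coset $tN$, Gallagher's theorem applied to the extension $\tilde\theta|_{H}$ yields a unique character $\nu_{t}$ of $H/N$ with $\psi|_{H}=\tilde\theta|_{H}\,\nu_{t}$; patching the values $\nu_{t}(tN)$ together, and assigning zero on $\pi$-singular cosets, will define a class function $\chi_{\psi}$ on $G/N$ with $\tilde\theta\chi_{\psi}=\psi$ pointwise. The isometry applied in reverse then gives $[\chi_{\psi},\chi_{\psi}]_{G/N}=1$, and the degree $\chi_{\psi}(1)=\psi(1)/\theta(1)$ has $\pi$-part $|G/N|_\pi$ by the relative $\pi$-defect zero hypothesis.

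The main obstacle is upgrading these norm-one class functions to genuine irreducible characters: that $\tilde\theta\chi\in\Irr(G)$ and $\chi_{\psi}\in\Irr(G/N)$. My plan here is to apply Brauer's characterization of characters on each elementary subgroup $E\le G$. The set $E^{\circ}:=\{e\in E:e_\pi\in N\}$ is a normal subgroup of $E$, being the preimage in $E$ of the $\pi'$-elements of the elementary quotient $E/(E\cap N)$ (which form a subgroup in each factor of the elementary decomposition), and $\tilde\theta|_{E^{\circ}}$ is a character of $E^{\circ}$ since $E^{\circ}N/N$ is a $\pi'$-subgroup of $G/N$ on which $\tilde\theta$ restricts to an extension of $\theta$. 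Both $\tilde\theta$ and $\chi$ vanish off $E^{\circ}$, so $(\tilde\theta\chi)|_{E}$ is the extension by zero of the character $(\tilde\theta|_{E^{\circ}})(\chi|_{E^{\circ}})$ of $E^{\circ}$, which by normality of $E^{\circ}$ equals $[E:E^{\circ}]^{-1}$ times the corresponding induced character to $E$; the $\pi$-defect zero structure of $\chi$ is then used to ensure that each irreducible multiplicity in this induced character is divisible by the $\pi$-number $[E:E^{\circ}]$, making $(\tilde\theta\chi)|_{E}$ a generalized character of $E$. Brauer's theorem then implies that $\tilde\theta\chi$ is a generalized character of $G$; combined with norm one and positivity at $1$, this forces $\tilde\theta\chi\in\Irr(G)$, and the degree condition places it in $\rdz_\pi(G|\theta)$. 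The symmetric argument for $\chi_{\psi}$ will complete the bijection.
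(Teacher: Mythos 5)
Your normalization step, the coset identity $\sum_{n\in N}|\tilde\theta(tn)|^{2}=|N|$ (the paper simply quotes Lemma 8.14 of Isaacs here; your Schur-orthogonality computation is an equivalent direct proof), and the resulting isometry $[\tilde\theta\chi,\tilde\theta\chi']=[\chi,\chi']$ all match the paper's argument. The genuine gaps are at the two places where the real work lies. First, in the Brauer-induction step you correctly observe that $(\tilde\theta\chi)_E$ is $[E:E^{\circ}]^{-1}$ times a character induced from the normal subgroup $E^{\circ}$, but this only shows that the $\pi$-number $[E:E^{\circ}]$ times each inner product $[(\tilde\theta\chi)_E,\xi]$ is an integer. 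The assertion that ``the $\pi$-defect zero structure of $\chi$ is then used to ensure that each irreducible multiplicity is divisible by $[E:E^{\circ}]$'' is precisely the heart of the proof, and no mechanism is given; it cannot be read off from the restriction to $E$ alone. The paper closes this by a second, global estimate: expressing the inner product through the central character values $\omega(\bar x)=|G/N:C_{G/N}(\bar x)|\chi(\bar x)/\chi(1)$, which are algebraic integers, and using that the $\pi$-part $A/N$ centralizes $\bar x$, it shows that a suitable $\pi'$-number times the inner product is an algebraic integer; it is exactly here that the hypothesis $\chi(1)_\pi=|G/N|_\pi$ enters, and the two coprime divisibilities together force integrality. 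Without some such input your sketch does not yield that $\tilde\theta\chi$ is a generalized character.

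Second, your route to surjectivity rests on the unproved claim that every $\psi\in\rdz_\pi(G|\theta)$ vanishes on elements $x$ with $x_\pi\notin N$. This is a nontrivial theorem (a relative analogue of the vanishing of defect-zero characters on singular elements), not an off-the-shelf fact: Murai proves statements of this kind with block theory, and the present paper deliberately avoids it. Surjectivity is obtained there from the counting result $|\dz_\pi(G/N)|=|\rdz_\pi(G|\theta)|$ (Theorem D), proved by passing to an isomorphic character triple with $N$ central of $\pi$-order (Corollary 2.4) and constructing an explicit bijection in that case (Theorem 4.2), where the vanishing of $\psi$ off $G^{0}$ emerges as a consequence of the norm computation rather than being assumed. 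Even granting your vanishing claim, the patched class function $\chi_\psi$ must be shown to be a generalized character of $G/N$ before norm one gives irreducibility, and the ``symmetric argument'' you allude to would again require a relative algebraic-integer input (the paper's Lemma 4.1), which your sketch does not supply. So as written, both the integrality step and the construction of the inverse are incomplete.
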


The following result is an essential ingredient of the proof of Theorem C.
Also, we can use it to generalize Theorem 2.3
of \cite{CL2012} (with $\pi$ in place of the prime $p$),
which is enough to obtain the $\pi$-analogues of the main theorems of that paper
(with Isaacs' $\pi$-partial characters in place of $p$-Brauer characters),
but we will not present the details here.

\begin{thmA}
Let $(G,N,\theta)$ be a character triple. If the order of $[\theta]_{G/N}\in H^2(G/N,\mathbb{C}^\times)$ is a $\pi$-number
for a prime set $\pi$, then $|\dz_\pi(G/N)|=|\rdz_\pi(G|\theta)|$.
\end{thmA}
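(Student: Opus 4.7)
By Theorem~A, the hypothesis that $[\theta]_{G/N}$ has $\pi$-order yields a $\pi$-quasi extension $\tilde\theta$ of $\theta$ to $G$, which I normalize so that $\tilde\theta$ vanishes on $G\setminus\Omega$, where $\Omega:=\{g\in G:g_\pi\in N\}$. My plan is to use $\tilde\theta$ to set up an isometric injection $\Phi:\dz_\pi(G/N)\hookrightarrow\rdz_\pi(G|\theta)$ and a matching injection in the reverse direction, so that the two sets have the same cardinality. The sharper identification of these maps with $\chi\mapsto\tilde\theta\chi$ and its inverse---and in particular that the forward map sends $\chi$ to a \emph{single} irreducible character---is reserved for Theorem~C, which will use Theorem~D as counting input.

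A preliminary step is a common vanishing description: an inflated $\chi\in\dz_\pi(G/N)$ vanishes on $G\setminus\Omega$ automatically, and I would prove the relative analogue that $\psi\in\Irr(G|\theta)$ lies in $\rdz_\pi(G|\theta)$ if and only if $\psi(g)=0$ whenever $g_\pi\notin N$---either by a direct centralizer-order estimate or by reducing via a character triple isomorphism to the central linear case, where it is the classical defect-zero vanishing. Both character sets are then orthonormal systems inside the subspace $\mathcal V\subseteq\cf(G)$ of class functions supported on $\Omega$.

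The computational heart of the argument is the identity
\[
\tfrac{1}{|N|}\sum_{n\in N}|\tilde\theta(yn)|^2=1\qquad\bigl(yN\in(G/N)_{\pi'}\bigr),
\]
which follows by applying the orthogonality relations for $\theta$ to the matrix coefficients of the irreducible extension $\tilde\theta|_{\langle y\rangle N}$: the sum collapses to $\tfrac{|N|}{\theta(1)}\mathrm{tr}(yy^{-1})=|N|$. From this I obtain (i) the isometry $\langle\tilde\theta\chi,\tilde\theta\chi'\rangle_G=\langle\chi,\chi'\rangle_{G/N}$ on $\dz_\pi(G/N)$, and (ii) the formula $\Psi(\psi)(yN):=\tfrac{1}{|N|}\sum_{n\in N}\overline{\tilde\theta(yn)}\psi(yn)$ (extended by $0$ off $(G/N)_{\pi'}$) satisfies $\Psi(\tilde\theta\chi)=\chi$ for all $\chi\in\dz_\pi(G/N)$. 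Using Gallagher's theorem to expand $\psi|_{\langle y\rangle N}$ as a nonnegative integer combination of extensions $\eta\beta$ of $\theta$ (where $\beta\in\Lin(\langle y\rangle N/N)$), one rewrites $\Psi(\psi)(yN)=\sum_\beta\langle\psi|_{\langle y\rangle N},\eta\beta\rangle\beta(y)$, which shows $\Psi$ is injective on $\rdz_\pi(G|\theta)$ and lands in the subspace of class functions of $G/N$ supported on $\pi'$-classes.

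Together the two injections $\Phi:\dz_\pi(G/N)\hookrightarrow\rdz_\pi(G|\theta)$ (via $\chi\mapsto\tilde\theta\chi$, with constituents in $\rdz_\pi(G|\theta)$ forced by the degree identity $\tilde\theta\chi(1)=\theta(1)\chi(1)$, the support in $\Omega$, and the norm-$1$ consequence of the isometry) and $\Psi:\rdz_\pi(G|\theta)\hookrightarrow\dz_\pi(G/N)$ will force $|\dz_\pi(G/N)|=|\rdz_\pi(G|\theta)|$. The main obstacle I anticipate is verifying in full generality that $\Phi(\chi)$ is a \emph{genuine} character (not just a virtual one of norm $1$) and dually that $\Psi(\psi)$ is actually a character of $G/N$: the fact that $\tilde\theta|_H\cdot\chi|_H$ (resp.\ the Gallagher quotient on $H$) is a character on each $\pi'$-subgroup $H\supseteq N$ does not immediately give the global statement via Brauer's criterion on elementary subgroups, and the cleanest resolution is to pass via a character triple isomorphism to the central linear case, where $|\tilde\theta|=1$ on $\Omega$ and the character property is manifest.
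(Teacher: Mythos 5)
Your overall route is the same as the paper's: reduce via a character triple isomorphism (Corollary 2.4) to the case where $N$ is a central cyclic $\pi$-group and $\theta$ is faithful linear, and there compare $\dz_\pi(G/N)$ with $\rdz_\pi(G|\theta)$ by multiplying with (and dividing out) a quasi extension; your isometry computation $\frac{1}{|N|}\sum_{n\in N}|\tilde\theta(yn)|^2=1$ is correct and is exactly the paper's use of Lemma 8.14 of Isaacs. But there is a genuine gap at precisely the point you flag as "the main obstacle'': the claim that in the central linear case "the character property is manifest'' is false, and nothing in your sketch replaces it. Even with $N\le Z(G)$, $\theta$ linear and $|\tilde\theta|=1$ on $\Omega$, the class function $g\mapsto\theta(g_\pi)\chi(g_{\pi'})$ (for $g_\pi\in N$, zero elsewhere) is a priori just a class function; that it is a \emph{generalized} character is the generalization of Problem 8.12 of Isaacs, and it genuinely uses the $\pi$-defect-zero hypothesis on $\chi$ (it is false without it). The paper's Theorem 4.2(a) proves this by Brauer's characterization on nilpotent subgroups $E=A\times B$, combining rationality of $[\psi_E,\xi]$ with the algebraic integrality of $\theta(1)|G:C_G(Nx)|\chi(x)/\chi(1)$ (Lemma 4.1) and a $\pi$ versus $\pi'$ coprimality argument. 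Your norm-one and support computations only upgrade an already-known generalized character to an irreducible one; they cannot produce generalized-character-ness, which is the heart of the matter.

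The same omission affects your reverse map $\Psi$: for the two-injections counting argument you need $\Psi(\psi)$ to lie in $\dz_\pi(G/N)$, i.e.\ to be a genuine irreducible character of $G/N$ of $\pi$-defect zero, but your sketch only shows it is a class function supported on $\pi'$-classes with algebraic-integer-flavored values. (In the paper this direction is handled in Theorem 4.2(c) by symmetry, interchanging $\theta$ and $\mu=1_N$, after the generalized-character step is available, and the equality case of the norm computation also yields the vanishing statement you wanted to prove separately.) So to complete your argument you must supply, in the central case, the Brauer-characterization/central-character argument for both directions; once that is done, your plan coincides with the paper's proof.
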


We remark that our theorems generalize those of Murai in \cite{M2012},
but his proofs involved block theory while ours are purely character-theoretic.

All groups considered in this paper are finite, and the notation and terminology are mostly taken from Isaacs' book \cite{I1976}.

\section{Proof of Theorem A}

We begin by reviewing some basic properties of projective representations
and the cohomology element associated with a character triple.

\begin{lem}\label{proj}
Let $(G,N,\theta)$ be a character triple. Then there exists a projective representation
$\mathfrak X$ of $G$ such that the following hold.

{\rm (a)} ${\mathfrak X}_N$ is a representation of $N$ affording $\theta$.

{\rm (b)} $\X(ag)=\X(a)\X(g)$ and $\X(ga)=\X(g)\X(a)$ for all $g\in G$ and $a\in N$.\\
Furthermore, if $\alpha$ is the factor set of $\X$,
then $\alpha\in Z^2(G,\mathbb{C}^\times)$ is the inflation of
$\bar\alpha\in Z^2(G/N,\mathbb{C}^\times)$ and the cohomology element
$$[\theta]_{G/N}=[\bar\alpha]\in H^2(G/N,\mathbb{C}^\times)$$
depends only on $\theta$. Also, $\theta$ is extendible to $G$ if and only if $[\theta]_{G/N}=1$.
\end{lem}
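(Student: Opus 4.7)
The plan is to construct $\X$ explicitly by lifting an ordinary representation of $N$ along a transversal to $G$. First I would fix a representation $\rho\colon N\to\GL_n(\mathbb{C})$ affording $\theta$ and a set of coset representatives $t_1,\ldots,t_m$ for $N$ in $G$, with $t_1=1$. Because $\theta$ is $G$-invariant, each conjugate representation $n\mapsto \rho(t_int_i^{-1})$ is equivalent to $\rho$, so I can choose invertible matrices $M_i$ satisfying $M_i\rho(n)M_i^{-1}=\rho(t_int_i^{-1})$ for all $n\in N$, normalized by $M_1=I$. Defining
\[
\X(at_i)=\rho(a)M_i\qquad(a\in N,\ 1\le i\le m)
\]
immediately yields (a), and (b) follows from the intertwining relation together with the multiplicativity of $\rho$.

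To read off the factor set $\alpha$, I would write $g=at_i$ and $h=bt_j$ and set $t_it_j=c_{ij}t_k$ with $c_{ij}\in N$. A direct computation using the intertwining relation gives $\X(g)\X(h)=\rho(a)\rho(t_ibt_i^{-1})M_iM_j$, while $\X(gh)=\rho(a)\rho(t_ibt_i^{-1})\rho(c_{ij})M_k$. Since both $M_iM_j$ and $\rho(c_{ij})M_k$ intertwine $\rho$ with its conjugate by $t_it_j$, Schur's lemma (applied to the irreducible $\rho$) yields a scalar $\alpha(g,h)\in\mathbb{C}^\times$ with $M_iM_j=\alpha(g,h)\rho(c_{ij})M_k$; crucially, this scalar depends only on the cosets $gN$ and $hN$, so $\alpha$ is the inflation of some $\bar\alpha\in Z^2(G/N,\mathbb{C}^\times)$, and the cocycle identity for $\bar\alpha$ is precisely the associativity of $\X(g)\X(h)\X(k)$.

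The main obstacle is showing that the cohomology class $[\bar\alpha]$ is intrinsic to $\theta$, independent of the choices of $\rho$, the $t_i$, and the $M_i$. Since $\rho$ is irreducible, Schur's lemma forces each $M_i$ to be unique up to a scalar, so replacing $M_i$ by $\mu_iM_i$ multiplies $\bar\alpha$ by the coboundary of the $1$-cochain $t_iN\mapsto \mu_i$; adjusting the transversal within a fixed coset has the same effect after absorbing a factor from $\rho$; and replacing $\rho$ by an equivalent representation conjugates every $M_i$ by a single matrix, leaving $\alpha$ unaltered. These together make $[\theta]_{G/N}:=[\bar\alpha]$ well defined. For the final assertion: if $\theta$ extends to $\hat\theta\in\Irr(G)$ with affording representation $\tilde\rho$, then taking $\rho=\tilde\rho_N$ and $M_i=\tilde\rho(t_i)$ makes $\alpha\equiv 1$, so $[\theta]_{G/N}=1$; conversely, if $[\bar\alpha]=1$, writing $\bar\alpha$ as a coboundary and rescaling each $M_i$ by the appropriate scalar forces $\alpha\equiv 1$, at which point $\X$ becomes an ordinary representation of $G$ whose trace is an extension of $\theta$.
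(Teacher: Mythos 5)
Your argument is correct and is exactly the standard construction: the paper does not prove this lemma itself but simply cites Theorems 11.2 and 11.7 of Isaacs, and your transversal-plus-Schur's-lemma construction, the coboundary analysis of the choices of $\rho$, the $t_i$ and the $M_i$, and the extendibility criterion reproduce that cited argument. The only point worth making explicit is that any projective representation satisfying (a) and (b) arises from such choices (take $M_i=\X(t_i)$), so your well-definedness analysis indeed shows $[\theta]_{G/N}$ depends only on $\theta$.
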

\begin{proof}
This is Theorems 11.2 and 11.7 of \cite{I1976}.
\end{proof}

\begin{lem}\label{res}
Let $(G,N,\theta)$ be a character triple with cohomology element $[\theta]_{G/N}$
defined in Lemma \ref{proj}, and suppose that $H/N$ is a subgroup of $G/N$. Then the following hold.

{\rm (a)} The restriction map ${\rm res}^{G/N}_{H/N}: H^2(G/N,\mathbb{C}^\times)\to  H^2(H/N,\mathbb{C}^\times)$
carries $[\theta]_{G/N}$ to $[\theta]_{H/N}$, the cohomology element associated with the character triple $(H,N,\theta)$.

{\rm (b)} If $\theta$ extends to $H$, then the order of $[\theta]_{G/N}$ divides $|G:H|$.
\end{lem}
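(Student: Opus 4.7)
\textbf{Proof proposal for Lemma \ref{res}.} For part (a), the plan is to transport the projective representation of $G$ to $H$ by restriction. Let $\mathfrak{X}$ be a projective representation of $G$ as in Lemma \ref{proj}, with factor set $\alpha\in Z^2(G,\mathbb{C}^\times)$ equal to the inflation of some $\bar\alpha\in Z^2(G/N,\mathbb{C}^\times)$. I would first check that the restriction $\mathfrak{X}_H$ is a projective representation of $H$ that satisfies properties (a) and (b) of Lemma \ref{proj} for the triple $(H,N,\theta)$: the equality $(\mathfrak{X}_H)_N=\mathfrak{X}_N$ still affords $\theta$, and the multiplicativity conditions involving elements of $N$ are inherited from $G$. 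The factor set of $\mathfrak{X}_H$ is then the restriction $\alpha|_{H\times H}$, and since inflation from $G/N$ to $G$ commutes with restriction along the square $H\hookrightarrow G$, $H/N\hookrightarrow G/N$, this restricted factor set is the inflation of $\bar\alpha|_{(H/N)\times(H/N)}$. Passing to cohomology classes gives $[\theta]_{H/N}=[\bar\alpha|_{(H/N)\times(H/N)}]=\mathrm{res}^{G/N}_{H/N}([\bar\alpha])=\mathrm{res}^{G/N}_{H/N}([\theta]_{G/N})$, as required.

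For part (b), the plan is to combine part (a) with the standard corestriction-restriction identity. Assuming $\theta$ extends to $H$, the last sentence of Lemma \ref{proj} applied to the triple $(H,N,\theta)$ yields $[\theta]_{H/N}=1$ in $H^2(H/N,\mathbb{C}^\times)$, so by part (a) we have $\mathrm{res}^{G/N}_{H/N}([\theta]_{G/N})=1$. I would then apply $\mathrm{cor}^{G/N}_{H/N}$ and invoke the general identity
\[
\mathrm{cor}^{G/N}_{H/N}\circ\mathrm{res}^{G/N}_{H/N}(c)=c^{|G:H|}
\qquad\text{for every } c\in H^2(G/N,\mathbb{C}^\times),
\]
to conclude that $[\theta]_{G/N}^{|G:H|}=1$, i.e.\ the order of $[\theta]_{G/N}$ divides $|G:H|$.

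The only genuine subtlety is justifying the interface between the concrete, factor-set description of $[\theta]_{G/N}$ used in the paper and the formal restriction/corestriction operations from group cohomology; once one trusts that restriction of cocycles corresponds to $\mathrm{res}$ in $H^2$ and that the cor-res composition equals multiplication by the index, both parts become routine. An alternative, self-contained route for (b) would be to avoid corestriction entirely: let $n=|G:H|$ and observe that $[\theta]_{G/N}^n$ is represented by $\prod_{t}\alpha^t$ over a transversal of $H/N$ in $G/N$ (or dually, use the induced projective representation $\mathfrak{X}^G$ from an extension of $\theta$ to $H$, whose factor set is trivial). This rephrasing avoids the cohomological machinery but essentially reproves the cor-res identity by hand, so I expect the shorter argument above to be the cleanest presentation.
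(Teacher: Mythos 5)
Your proposal is correct and follows essentially the same route as the paper: part (a) by observing that restricting the projective representation of Lemma \ref{proj} to $H$ yields one associated with $(H,N,\theta)$ whose factor set is the restricted cocycle, and part (b) by combining $[\theta]_{H/N}=1$ with the identity $\mathrm{cor}^{G/N}_{H/N}\circ\mathrm{res}^{G/N}_{H/N}=(\,\cdot\,)^{|G:H|}$, which is exactly the corestriction argument the paper cites from Brown. The only difference is that you spell out the details of (a), which the paper dismisses as clear.
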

\begin{proof}
(a) is clear from Lemma \ref{proj}. For (b), we see that $[\theta]_{H/N}=1$,
and the result follows by considering the composition of the restriction map ${\rm res}^{G/N}_{H/N}$ in (a)
with the corestriction map
$${\rm cor}^{G/N}_{H/N}: H^2(H/N,\mathbb{C}^\times)\to  H^2(G/N,\mathbb{C}^\times)$$
(see for instance Proposition 9.2 of \cite{B1982}).
\end{proof}

The following is well known, and we use it to establish some notation.

\begin{lem}\label{triple}
Suppose that $(G,N,\theta)$ is a character triple and let $\mathfrak X$ be a
projective representation of $G$ associated with $\theta$ such that
the factor set $\alpha$ of $\mathfrak X$ takes $n$-th roots of unity values.
Let $Z\le \mathbb{C}^\times$ be the cyclic group of order $n$.
Let $\hat{G}=\{(g,z)|g\in G,\ z\in Z\}$, where we define the multiplication
$$(g_1,z_1)(g_2,z_2)=(g_1g_2, \alpha(g_1,g_2)z_1z_2).$$
Identify $N$ and $Z$ with $N\times 1$ and $1\times Z$, respectively.
Then the following hold.

{\rm (a)} $\hat G$ is a finite group, and the map
$\rho:\hat G\to G$ given by $\rho(g,z)=g$ is a surjective homomorphism with kernel $Z$.

{\rm (b)} $N\NM \hat{G}$, $Z\le Z(\hat G)$,
and $\hat N=N\times Z\NM \hat G$.
Also, $\hat N/N\le Z(\hat G/N)$.

{\rm (c)} The map defined by $\hat{\mathfrak{X}}(g,z)=z\mathfrak{X}(g)$
is an irreducible representation of $\hat G$ whose character $\tau\in\Irr(\hat G)$
extends $\theta$.

{\rm (d)} Let $\hat{\theta}=\theta\times 1_{Z}\in\Irr(\hat{N})$
and $\hat\lambda\in\Irr(\hat N)$ be defined by
$\hat\lambda(a,z)=z^{-1}$.
Then $\hat\theta$ is $\hat G$-invariant,
$\hat\lambda$ is a $\hat G$-invariant linear character with kernel $N$,
and $\hat\lambda^{-1}\hat\theta$ extends to $\tau\in\Irr(\hat G)$.
In particular, $\tau_Z=\theta(1)(\hat\lambda^{-1})_Z$, that is, $\tau$ lies over $(\hat\lambda^{-1})_Z\in\Irr(Z)$.

{\rm (e)} $(\hat G/N,\hat N/N,\hat\lambda)$ is a character triple isomorphic to
$(G,N,\theta)$, where $\hat\lambda$ is viewed as a faithful linear character of $\hat N/N$.
\end{lem}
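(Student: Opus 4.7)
The plan is to verify parts (a)--(e) by direct computations that rest on two features of the factor set $\alpha$ supplied by Lemma \ref{proj}: that $\alpha$ is the inflation of $\bar\alpha\in Z^2(G/N,\mathbb{C}^\times)$, and that $\X_N$ is an ordinary representation of $N$ affording $\theta$. After the harmless normalization $\alpha(1,g)=\alpha(g,1)=1$ for all $g\in G$, inflation forces $\alpha(a,g)=\alpha(g,a)=1$ whenever $a\in N$, and in particular $\alpha|_{N\times N}\equiv 1$. These vanishings are precisely what allow $N$ and $Z$ to sit inside $\hat G$ as an internal direct product.

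For (a), associativity of the product in $\hat G$ is a transcription of the 2-cocycle identity for $\alpha$; the identity is $(1,1)$ and each $(g,z)$ has inverse $(g^{-1},\alpha(g,g^{-1})^{-1}z^{-1})$. The projection $\rho$ is a homomorphism by inspection of the product formula, with kernel $\{(1,z):z\in Z\}=Z$. For (b), the triviality of $\alpha$ on $N$-entries makes $(a,1)(a',1)=(aa',1)$, and the 2-cocycle identity applied to the triple $(g,a,g^{-1})$ collapses $(g,z)(a,1)(g,z)^{-1}$ to $(gag^{-1},1)$; together these give both the embedding and the normality of $N$. The centrality of $Z$ is immediate from the product formula, so $\hat N=N\times Z$ is normal and its image $\hat N/N\cong Z$ is central in $\hat G/N$. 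For (c), the one-line computation
\[
\hat{\mathfrak X}(g_1,z_1)\,\hat{\mathfrak X}(g_2,z_2)=z_1z_2\,\X(g_1)\X(g_2)=z_1z_2\,\alpha(g_1,g_2)\X(g_1g_2)=\hat{\mathfrak X}\bigl((g_1,z_1)(g_2,z_2)\bigr)
\]
shows $\hat{\mathfrak X}$ is a genuine representation; irreducibility is inherited from $\X$ (whose restriction to $N$ affords the irreducible $\theta$), and $\tau(a,1)=\theta(a)$ shows $\tau$ extends $\theta$.

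For (d), multiplicativity of $\hat\lambda$ on $\hat N$ is immediate from $\alpha|_{N\times N}\equiv 1$; $\hat G$-invariance follows from the conjugation formula $(g,z)(a,z')(g,z)^{-1}=(gag^{-1},z')$ recorded in (b); and the identities $\tau(a,z)=z\,\theta(a)=(\hat\lambda^{-1}\hat\theta)(a,z)$ and $\tau(1,z)=z\,\theta(1)$ deliver the remaining claims by direct evaluation. The substantive step is (e). Using $\rho$, one identifies $\hat G/\hat N$ with $G/N$, and the task is then to show that $[\hat\lambda]_{\hat G/N,\hat N/N}$ agrees with $[\theta]_{G/N}$ under this identification. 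I would argue this by observing that $\hat G/N\to G/N$ is a central extension by $\hat N/N\cong Z$ whose extension class in $H^2(G/N,Z)$ is, by construction, the one cut out by the cocycle $\bar\alpha$, so that applying $\hat\lambda$ to it returns exactly $[\bar\alpha]=[\theta]_{G/N}$. The character triple isomorphism then follows from Isaacs' characterization of triples with equal cohomology class (Theorem 11.28 of \cite{I1976}), and faithfulness of $\hat\lambda$ on $\hat N/N$ is visible from its definition since the kernel of $\hat\lambda$ is $N\times 1$.

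The main obstacle is the bookkeeping in (e): one must keep straight the direction of the inversion between $\hat\lambda$ and $\hat\lambda^{-1}$, together with the various inflation/restriction identifications, so that the two cohomology classes are matched on the nose rather than merely up to a sign flip or a coboundary. The other parts are formulaic verifications and present no hidden difficulty.
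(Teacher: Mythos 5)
The paper does not actually prove this lemma; it simply cites Theorem 5.6 and the proof of Corollary 5.9 of \cite{N2018}, so your direct verification is a self-contained alternative route. Parts (a)--(d) of your sketch are correct as computations, with two small touch-ups: no ``normalization'' of $\alpha$ is needed (nor allowed) --- Lemma \ref{proj}(b) already forces $\X(1)=I$ and hence $\alpha(a,g)=\alpha(g,a)=1$ for all $a\in N$, $g\in G$; if you replaced $\alpha$ by a cohomologous normalized cocycle you would have to replace $\X$ as well, since (c) uses precisely that $\alpha$ is the factor set of the given $\X$ under the convention $\X(g)\X(h)=\alpha(g,h)\X(gh)$. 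Also, in (d) the $\hat G$-invariance of $\hat\theta$ needs the $G$-invariance of $\theta$ in addition to the conjugation formula; say so.

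The real issue is the justification of (e), which as written is off by an inverse --- exactly the bookkeeping you flag but do not resolve. Identifying $\hat N/N$ with $Z$ via $(1,z)N\mapsto z$, the extension class of $1\to\hat N/N\to\hat G/N\to G/N\to 1$ is indeed $[\bar\alpha]$, but $\hat\lambda$ acts on $Z$ by $z\mapsto z^{-1}$, so ``applying $\hat\lambda$'' to that class gives $[\bar\alpha]^{-1}$, not $[\bar\alpha]$. What rescues the statement is a second inversion hidden in the convention: with $\X(g)\X(h)=\alpha(g,h)\X(gh)$, the class attached to a triple $(\Gamma,M,\varphi)$ with $M$ central and $\varphi$ linear is the \emph{inverse} of $\varphi$ applied to the extension class, and the two inversions cancel. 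Rather than track push-forwards at all, the clean fix is explicit: the function $\mathfrak{Y}\bigl((g,z)N\bigr)=z^{-1}$ is well defined on $\hat G/N$ (each coset is $\{(ga,z):a\in N\}$), restricts to $\hat\lambda$ on $\hat N/N$, satisfies the two conditions of Lemma \ref{proj}(b), and its factor set is
$$\mathfrak{Y}\bigl((g_1,z_1)N\bigr)\mathfrak{Y}\bigl((g_2,z_2)N\bigr)=\alpha(g_1,g_2)\,\mathfrak{Y}\bigl((g_1,z_1)(g_2,z_2)N\bigr),$$
i.e.\ literally the inflation of $\bar\alpha$ along $(\hat G/N)\big/(\hat N/N)\cong\hat G/\hat N\cong G/N$ (the last isomorphism induced by $\rho$). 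Since $\X$ and $\mathfrak{Y}$ then have equal factor sets modulo this identification, Theorem 11.28 of \cite{I1976} gives the character triple isomorphism directly, and faithfulness of $\hat\lambda$ on $\hat N/N$ is clear from $\ker\hat\lambda=N$. With this replacement for your one-line cohomological argument, the proof is complete and is essentially the same in spirit as Navarro's, just carried out by hand.
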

\begin{proof}
See Theorem 5.6 and the proof of Corollary 5.9 of \cite{N2018}.
\end{proof}

For the definition of character triple isomorphisms, see Definition 11.23 of \cite{I1976}.

\begin{cor}\label{tri}
Let $(G,N,\theta)$ be a character triple, and let $n$ be the order of $[\theta]_{G/N}\in H^2(G/N,\mathbb{C}^\times)$.
Then there exists an isomorphic character triple $(G^*,N^*,\theta^*)$
such that $N^*\le Z(G^*)$ is cyclic of order $n$ and $\theta^*$ is faithful.
\end{cor}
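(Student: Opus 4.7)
The plan is to apply Lemma \ref{triple} directly, but first I need to arrange that the cyclic group $Z$ appearing in that lemma has order exactly $n$. So the first task is to choose the projective representation $\mathfrak{X}$ with the right factor set. Given any projective representation $\mathfrak{X}$ of $G$ associated with $\theta$, its factor set $\alpha$ is the inflation of some $\bar\alpha\in Z^2(G/N,\mathbb{C}^\times)$ with $[\bar\alpha]=[\theta]_{G/N}$. Since this class has order $n$, the cocycle $\bar\alpha^n$ is a coboundary, say $\bar\alpha^n=\delta\nu$ for some $\nu:G/N\to\mathbb{C}^\times$. Choosing an $n$-th root $\mu$ of $\nu$ pointwise and replacing $\bar\alpha$ by the cohomologous cocycle $\bar\alpha\cdot\delta(\mu^{-1})$, I get a representative of $[\theta]_{G/N}$ whose values are $n$-th roots of unity. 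Adjusting $\mathfrak{X}$ accordingly, I may assume that $\alpha$ itself takes values in the cyclic group $Z\le\mathbb{C}^\times$ of order exactly $n$.

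With this setup in hand, I apply Lemma \ref{triple} to get the central extension $\hat G$ together with its normal subgroup $\hat N=N\times Z$, the faithful $\hat G$-invariant linear character $\hat\lambda\in\Irr(\hat N)$ with kernel $N$, and the character triple isomorphism between $(G,N,\theta)$ and $(\hat G/N,\hat N/N,\hat\lambda)$ furnished by part (e). The natural candidate is then
\[
G^*=\hat G/N,\qquad N^*=\hat N/N,\qquad \theta^*=\hat\lambda.
\]
From part (b) of the lemma I know that $\hat N/N\le Z(\hat G/N)$, so $N^*$ is central in $G^*$. Since $\hat N/N\cong Z$ is cyclic of order $n$ and $\hat\lambda$ (regarded as a character of $\hat N/N$) is faithful, $N^*$ is cyclic of order $n$ and $\theta^*$ is faithful. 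Finally, part (e) gives the required character triple isomorphism with $(G,N,\theta)$, which completes the construction.

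The only nontrivial step in this plan is the cohomological adjustment in the first paragraph, which guarantees that the $Z$ produced by Lemma \ref{triple} has the precise order $n$ rather than some arbitrary multiple of it. Once that normalization is in place, everything else is a direct reading of the content of Lemma \ref{triple}. Nothing further is needed.
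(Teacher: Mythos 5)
Your proposal is correct and follows essentially the same route as the paper: replace $\bar\alpha$ by a cohomologous cocycle whose values are $n$-th roots of unity (using that $\bar\alpha^n$ is a coboundary and extracting $n$-th roots of the coboundary function), then apply Lemma \ref{triple} and take $(G^*,N^*,\theta^*)=(\hat G/N,\hat N/N,\hat\lambda)$. The only cosmetic difference is that you explicitly adjust $\mathfrak{X}$ to realize the new factor set, a step the paper leaves implicit.
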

\begin{proof}
Using the notation of Lemmas \ref{proj}, we have $[\bar\alpha]^n=1$, so $\bar\alpha^n\in B^2(G/N,\mathbb{C}^\times)$,
and it follows that $\bar\alpha(\bar x,\bar y)^n=\nu(\bar x)\nu(\bar y)\nu(\bar x \bar y)^{-1}$ for some function $\nu:G/N\to\mathbb{C}^\times$.

Define $\mu:G/N\to\mathbb{C}^\times$ by setting $\mu(\bar x)^n=\nu(\bar x)^{-1}$ for each $\bar x\in G/N$,
and let
$$\bar\beta(\bar x,\bar y)=\bar\alpha(\bar x,\bar y)\mu(\bar x)\mu(\bar y)\mu(\bar x\bar y)^{-1}.$$
Then $[\theta]_{G/N}=[\bar\alpha]=[\bar\beta]$ with $\bar\beta(\bar x,\bar y)^n=1$.
Let $\beta$ be the inflation of $\bar\beta$ to $G\times G$,
so that $\beta(x,y)=\bar\beta(\bar x,\bar y)$ for $x,y\in G$.
Since $\beta$ takes $n$-th roots of unity values,
the result follows by Lemma \ref{triple}(e) (with $\beta$ in place of $\alpha$)
by setting $(G^*,N^*,\theta^*)=(\hat G/N,\hat N/N,\hat\lambda)$.
\end{proof}

We can now prove Theorem A, which we restate as follows in a slightly different form.

\begin{thm}
Let $(G,N,\theta)$ be a character triple and let $\pi$ be a set of primes.
Then the following are equivalent.

{\rm (a)} $\theta$ has a $\pi$-quasi extension to $G$.

{\rm (b)} $\theta$ extends to each subgroup $H$ of $G$, where $N\le H$ and $H/N$ is a $\pi'$-group.

{\rm (c)} $\theta$ extends to $P$, where $P/N\in\Syl_{p}(G/N)$ for each $p\notin\pi$.

{\rm (d)} The order of $[\theta]_{G/N}\in H^2(G/N,\mathbb{C}^\times)$ is a $\pi$-number.
\end{thm}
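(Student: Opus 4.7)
The plan is to prove the four conditions equivalent by the cycle (a)$\Rightarrow$(b)$\Rightarrow$(c)$\Rightarrow$(d)$\Rightarrow$(a). The first three implications are essentially immediate: (a)$\Rightarrow$(b) just unpacks the definition of a $\pi$-quasi extension; (b)$\Rightarrow$(c) uses that any Sylow $p$-subgroup with $p\notin\pi$ is a $\pi'$-subgroup; and (c)$\Rightarrow$(d) follows from Lemma \ref{res}(b), which shows that the order of $[\theta]_{G/N}$ divides $|G:P|$ for each such $P$ and is therefore coprime to every $p\notin\pi$, i.e.\ a $\pi$-number.

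The real content is (d)$\Rightarrow$(a). Setting $n=|[\theta]_{G/N}|$, the argument in Corollary \ref{tri} allows me to choose a projective representation of $G$ affording $\theta$ on $N$ whose factor set takes $n$-th roots of unity; Lemma \ref{triple} then supplies a central extension $\rho\colon\hat G\twoheadrightarrow G$ with kernel $Z$ cyclic of order $n$ (a $\pi$-group), together with $\tau\in\Irr(\hat G)$ restricting to $\theta$ on $N=N\times 1$. The key structural observation, forced by $\gcd(|Z|,\,\pi')=1$, is that every $\pi'$-element $y\in G$ has a \emph{unique} $\pi'$-preimage $\hat y\in\hat G$: if $\hat y_0$ is any preimage and $d=|y|$, then $\hat y_0^d\in Z$, and since $d$ is a $\pi'$-number the map $z'\mapsto(z')^d$ is a bijection on the $\pi$-group $Z$, so exactly one coset representative is killed by $d$. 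Using this, for each $g\in G$ with $g_\pi\in N$ I define the canonical lift $\hat g:=(g_\pi,1)\cdot\widehat{g_{\pi'}}\in\hat G$; the two factors commute because their commutator lies in $Z$ but is annihilated by the $\pi'$-order of $\widehat{g_{\pi'}}$, so $\hat g$ has $\pi$-part $(g_\pi,1)$ and $\pi'$-part $\widehat{g_{\pi'}}$.

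I then set $\tilde\theta(g):=\tau(\hat g)$ when $g_\pi\in N$ and $\tilde\theta(g):=0$ otherwise. The class-function property is immediate from $N\NM\hat G$ and the fact that conjugation preserves orders: for any preimage $\hat x$ of $x\in G$, the element $\hat x\hat g\hat x^{-1}$ is again a canonical lift, namely of $xgx^{-1}$. To verify the $\pi$-quasi-extension property, fix a $\pi'$-subgroup $H/N$ of $G/N$ and work inside $\hat H:=\rho^{-1}(H)$. Modulo $N$, the central subgroup $\hat N/N\cong Z$ has $\pi$-order while its quotient $H/N$ has $\pi'$-order, so the central extension splits as a direct product, yielding a subgroup $M\le\hat H$ with $N\le M$, $M\cap Z=1$ and $\rho|_M\colon M\xrightarrow{\sim}H$. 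The character $\tau|_M$ is then an irreducible extension of $\theta$ (irreducible because its $N$-restriction is already $\theta$) and transports along $\rho|_M$ to an extension $\chi\in\Irr(H)$ of $\theta$; a final check -- that the canonical lift of each $h\in H$ actually lies in $M$, because the $M$-preimage of $h_{\pi'}$ has $\pi'$-order and hence coincides by uniqueness with $\widehat{h_{\pi'}}$ -- yields $\tilde\theta|_H=\chi$. The main obstacle is precisely this last coordination: since $\hat G\to G$ admits no global section, one cannot simply pull $\tau$ back to $G$, and the whole construction relies on exploiting the coprimeness of $|Z|$ and every $\pi'$-index in $G/N$ to build local sections $M$ that are silently glued into one global class function by the uniqueness of $\pi'$-preimages.
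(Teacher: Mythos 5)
Your proposal is correct, and its skeleton---the cycle (a)$\Rightarrow$(b)$\Rightarrow$(c)$\Rightarrow$(d)$\Rightarrow$(a), with the first three implications handled exactly as in the paper via Lemma \ref{res}(b)---is the paper's. Even the class function you build is the same one: the paper sets $\tilde\theta(g)=\hat\lambda(x_\pi)\tau(x)$ for an arbitrary preimage $x$ of $g$, and evaluating this at your canonical lift $\hat g$ (whose $\pi$-part lies in $N\times 1$, where $\hat\lambda$ is trivial) gives exactly $\tau(\hat g)$. Where you genuinely diverge is in verifying that $\tilde\theta_H$ is a character extending $\theta$ for a $\pi'$-subgroup $H/N$: the paper first invokes Lemma \ref{res}(a) together with Lemma \ref{proj} to know that $\theta$ extends to $H$, then compares a chosen extension $\theta^*$ with $\tau_{\hat H}$ via Gallagher's theorem and constructs the explicit linear correction $\hat\mu(x)=\delta(x)^{-1}\hat\lambda(x_\pi)$; you instead split the coprime central extension $1\to\hat N/N\to\hat H/N\to H/N\to 1$ (abelian-kernel Schur--Zassenhaus, and a complement to a central subgroup is automatically a direct factor), obtain a section $M\le\hat H$ with $N\le M$ and $\rho|_M\colon M\xrightarrow{\sim}H$, transport $\tau|_M$, and use the uniqueness of $\pi'$-preimages to see that your canonical lifts land in $M$. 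Your route constructs the extension of $\theta$ to $H$ rather than appealing to its existence (so the implication (d)$\Rightarrow$(b) falls out of the argument and Gallagher is never needed), and it makes the class-function and normalization properties transparent; the paper's route avoids Schur--Zassenhaus and the bookkeeping about unique coprime lifts, staying entirely within Gallagher-style character arithmetic. Your compressed claims (unique $\pi'$-preimage of a $\pi'$-element, commutation of the two factors of $\hat g$ because their commutator is a central $\pi$-element annihilated by a $\pi'$-power, and the fact that a conjugate of a canonical lift is again a canonical lift) all check out by the one-line computations you indicate, so I see no gap.
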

\begin{proof}
That both (a) implies (b) and (b) implies (c) are trivial, and by Lemma \ref{res}(b) we see that (c) implies (d).
Now assume (d), and we want to prove (a).
By Lemma \ref{proj} and the proof of Lemma \ref{tri},
we can choose the projective representation $\X$ of $G$ such that the values of
the associated factor set $\alpha$ are $n$-th roots of unity, where $n$ is the order of $[\theta]_{G/N}$.

We use the notation of Lemma \ref{triple}.
For $g\in G$, let $x\in\hat G$ be a preimage of $g$
under the homomorphism $\rho:\hat G\to G$, and define
$$\tilde{\theta}(g)=
\left\{
\begin{array}{cc}
\hat\lambda(x_\pi)\tau(x), & \text{if}\, g_\pi \in N,\\
0, & \text{otherwise}.
\end{array}
\right.$$
Note that if $g_\pi \in N$, then $x_\pi \in \hat N$ since $\rho(x_\pi)=g_\pi$ and $\rho^{-1}(N)=\hat N$,
and thus $\hat\lambda(x_\pi)$ makes sense.
Furthermore, if $y\in\hat G$ is another preimage of $g$, then $y=zx$ for some $z\in Z$, and hence
$$\hat\lambda(y_\pi)\tau(y)=\hat\lambda(z)\hat\lambda(x_\pi)\hat\lambda(z)^{-1}\tau(x)=
\hat\lambda(x_\pi)\tau(x),$$
where the first equality holds because $\hat\lambda$ is a linear character of $\hat N$
and $\tau_Z=\theta(1)(\hat\lambda^{-1})_Z$. Thus $\tilde{\theta}$ is a well-defined class function on $G$.

To prove that $\tilde{\theta}$ is a $\pi$-quasi extension of $\theta$ to $G$,
let $H/N$ be any $\pi'$-subgroup of $G/N$, and we want to show that $\tilde\theta_H$ is
a character of $H$ that extends $\theta$.
By Lemma \ref{res}(a), we see that $[\theta]_{H/N}\in H^2(H/N,\mathbb{C}^\times)$ has $\pi$-order,
and by Lemma \ref{proj}, we know that $\theta$ extends to $H$.
Now, fix an extension $\theta^*\in\Irr(H)$ of $\theta$ to $H$,
and let $\hat H=\rho^{-1}(H)$ be the full preimage of $H$ in $\hat G$ via the homomorphism $\rho$.
Then we can view $\theta^*\in\Irr(\hat H)$ that extends $\hat\theta$ and hence $\theta$.
Since $\tau_{\hat H}$ is also an extension of $\theta$ to $\hat H$,
Gallagher' theorem (see Corollary 6.17 of \cite{I1976}) guarantees that $\theta^*=\delta\tau_{\hat H}$
for a unique linear character $\delta\in\Irr(\hat H/N)$. In this case, we have $$\theta(1)1_Z=(\theta^*)_Z=\delta_Z\tau_Z=\theta(1)\delta_Z({\hat\lambda}^{-1})_Z,$$
so that $\delta_Z=\hat\lambda_Z$.

Define $\hat{\mu}(x)=\delta(x)^{-1}\hat\lambda({x}_{\pi})$ for each $x\in \hat H$.
Since $\hat H/\hat N\cong H/N$ is a $\pi'$-group, we see that $x_\pi\in \hat N$ and thus $\hat\lambda(x_\pi)$ makes sense.
We want to show that $\hat{\mu}$ is a linear character of $\hat{H}$.
To prove this, observe that $\hat N/N$ is a central Hall $\pi$-subgroup of $\hat H/N$,
so $N(xy)_{\pi}=N{x}_{\pi}N{y}_{\pi}\in \hat H/N$ for all $x,y\in\hat H$,
and since $\hat\lambda$ is trivial on $N$, we have $\hat\lambda(x_{\pi})\hat\lambda(y_{\pi})=\hat\lambda((xy)_{\pi})$.
Thus $\hat{\mu}(xy)=\hat{\mu}(x)\hat{\mu}(y)$, so that $\hat{\mu}$ is a linear character of $\hat H$, as wanted.

Furthermore, note that both $\delta$ and $\hat\lambda$ are trivial on $N$ and that $\delta_Z=\hat\lambda_Z$.
It follows that $\hat N=NZ$ is contained in the kernel of $\hat\mu$,
and thus we can view $\hat\mu\in\Irr(\hat H/\hat N)$.
Let $\mu\in\Irr(H/N)$ correspond to $\hat\mu$ via the isomorphism $\hat H/\hat N\to H/N$ induced by $\rho:\hat G\to G$.
Then $\mu$ is a linear character of $H/N$, and for each $g\in H$, we choose a preimage $x\in\hat H$ of $g$,
so that $\mu(g)=\hat\mu(x)$. Then we have
$$\mu(g)\theta^*(g)=\hat\mu(x)\theta^*(x)=\delta(x)^{-1}\hat\lambda(x_\pi)\delta(x)\tau(x)
=\hat\lambda(x_\pi)\tau(x)=\tilde\theta(g),$$
and hence $\tilde\theta_H=\mu\theta^{*}$.
This proves that $\tilde\theta_H$ is a character of $H$ that is an extension of $\theta$ to $H$,
so $\tilde\theta$ is indeed a $\pi$-quasi extension of $\theta$ to $G$.
This proves (a), as wanted.
\end{proof}

\section{Proof of Theorem B}
Fix a character triple $(G,N,\theta)$,
and let $\tilde\theta$ be a $\pi$-quasi extension of $\theta$ to $G$.
Then we can obtain a normalized $\pi$-quasi extension $\tilde\theta_n$ of $\theta$ from $\tilde\theta$ by setting
$$\tilde\theta_n(x)=\left\{
\begin{array}{cc}
\tilde\theta(x),  & \text{if}\; x_{\pi}\in N,\\
0,& \text{otherwise.}
\end{array}
\right.$$
Also, in the situation of Theorem C from the introduction,
it is easy to see that $\tilde\theta\chi=\tilde\theta_n\chi$ for all $\chi\in\dz_\pi(G/N)$.
Therefore, we need only consider normalized $\pi$-quasi extensions.

The following is Theorem B, which we restate here for convenience.

\begin{thm}\label{unique}
Let $(G,N,\theta)$ be a character triple, and suppose that $\varphi$ and $\varphi'$ are normalized $\pi$-quasi extensions of $\theta$ to $G$. Then there exists a linear character $\lambda$ of $G/N$ such that $\varphi'=\lambda\varphi$.
\end{thm}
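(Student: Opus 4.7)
The plan is to build a candidate linear character $\lambda$ of $G/N$ locally via Gallagher's theorem and then verify it really is a character using Brauer's induction theorem.

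For each $g\in G$, set $H_g=N\langle g_{\pi'}\rangle$, so that $H_g/N$ is cyclic of $\pi'$-order. By hypothesis $\varphi_{H_g}$ and $\varphi'_{H_g}$ are both characters of $H_g$ extending the irreducible character $\theta$, so Gallagher's theorem (Corollary~6.17 of \cite{I1976}) supplies a unique linear $\mu_{H_g}\in\Irr(H_g/N)$ with $\varphi'_{H_g}=\mu_{H_g}\varphi_{H_g}$; I then define $\lambda(g)=\mu_{H_g}(g_{\pi'})$. A routine use of the uniqueness part of Gallagher, together with the class-function property of $\varphi$ and $\varphi'$, should verify that (i) $\lambda$ is a class function of $G$ with values in roots of unity, (ii) $\lambda\equiv 1$ on $N$ (since there $H_g=N$ and $\mu_N$ is trivial), and (iii) $\lambda(g)=\lambda(g_{\pi'})$ (since $H_g=H_{g_{\pi'}}$). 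Thus $\lambda$ descends to a function $\bar\lambda:G/N\to\mathbb{C}^\times$ with $|\bar\lambda|\equiv 1$, $\bar\lambda(1)=1$, and $\bar\lambda(\bar g)=\bar\lambda((\bar g)_{\pi'})$.

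The core step is to show $\bar\lambda$ is multiplicative, which I would prove by invoking Brauer's characterization of generalized characters via restriction to Brauer elementary subgroups. Let $E=P\times C\le G/N$ with $P$ a $p$-group and $C$ a cyclic $p'$-group; since $E$ is nilpotent it has a unique Hall decomposition $E=E_\pi\times E_{\pi'}$. Every element of $E_\pi$ is a $\pi$-element of $G/N$ whose $\pi'$-part is trivial, so $\bar\lambda|_{E_\pi}\equiv 1$. Lifting $E_{\pi'}$ to $L\le G$ with $L/N=E_{\pi'}$ gives a $\pi'$-subgroup of $G/N$ on which Gallagher produces a single linear character $\mu_L\in\Irr(L/N)$ with $\varphi'_L=\mu_L\varphi_L$; uniqueness under restriction forces $\mu_{H_g}=\mu_L|_{H_g/N}$ for every $g\in L$, hence $\bar\lambda|_{E_{\pi'}}=\mu_L$. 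Because $\bar\lambda$ depends only on $\pi'$-parts and $(ab)_{\pi'}=b$ for $(a,b)\in E_\pi\times E_{\pi'}$, this gives $\bar\lambda|_E=1_{E_\pi}\boxtimes\mu_L$, which is a linear character of $E$.

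Brauer's theorem then implies that $\bar\lambda$ is a generalized character of $G/N$. Since $|\bar\lambda(g)|=1$ throughout, $\langle\bar\lambda,\bar\lambda\rangle=1$, so expanding $\bar\lambda=\sum n_i\chi_i$ in the irreducible basis forces exactly one $n_i=\pm1$ and the rest to vanish; combined with $\bar\lambda(1)=1>0$ this yields $\bar\lambda\in\Lin(G/N)$. Finally $\varphi'=\lambda\varphi$ on all of $G$: on $\{g:g_\pi\in N\}$ by the very definition of $\lambda$, and on its complement because both $\varphi$ and $\varphi'$ vanish there by the normalization hypothesis. I expect the main obstacle to be the Brauer elementary step, which however reduces, via the clean nilpotent Hall decomposition of $E$, to the Gallagher-uniqueness compatibility under restriction already used to set up $\lambda$.
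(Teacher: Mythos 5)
Your proposal is correct and follows essentially the same route as the paper's proof: define $\lambda$ locally via Gallagher's theorem on the preimages of cyclic $\pi'$-subgroups of $G/N$, check compatibility and the class-function property, verify the generalized-character condition via Brauer's theorem using the $\pi\times\pi'$ decomposition of nilpotent (elementary) subgroups, and conclude linearity from $[\lambda,\lambda]=1$ and $\lambda(1)=1$, with the normalization hypothesis handling elements $g$ with $g_\pi\notin N$. The only cosmetic difference is that you observe $\bar\lambda$ restricts to the actual linear character $1_{E_\pi}\boxtimes\mu_L$ on each elementary subgroup, whereas the paper computes the inner product $[\lambda_E,\eta]=[1_A,\alpha][\hat\mu_B,\beta]$ directly; these are the same argument.
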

\begin{proof}
By definition, it suffices to show that there is a linear character $\lambda$ of $G/N$ such that $\varphi'(g)=\lambda(Ng)\varphi(g)$
for each $g\in G$ with $g_\pi\in N$.

For each $\pi'$-element $x\in G/N$, let $N_x$ be the full preimage of the cyclic subgroup $\langle x\rangle$ in $G$,
so that $N_x/N\cong \langle x\rangle$ is a $\pi'$-group.
By definition, we see that both $\varphi_{N_x}$ and $\varphi'_{N_x}$ are extensions of $\theta$ to $N_x$,
and thus by Gallagher's theorem, there exists a unique linear character $\mu_x\in\Irr(N_x/N)$ such that $\varphi'_{N_x}=\mu_x\varphi_{N_x}$. We define $\hat\mu(x)=\mu_x(x)$,
so that $\hat\mu$ is a complex-valued function defined on $\pi'$-elements of $G/N$.

Note that if $H/N$ is a $\pi'$-subgroup of $G/N$,
then there is a unique linear character $\nu\in\Irr(H/N)$ such that $\varphi'_H=\nu\varphi_H$
because $\varphi_H$ and $\varphi'_H$ are extensions of $\theta$ to $H$,
and in this case, the uniqueness guarantees that $\nu(x)=\mu_x(x)=\hat\mu(x)$ for each $x\in H/N$.
In particular, $\hat\mu_{H/N}=\nu$,
so the restriction of $\hat\mu$ to each $\pi'$-subgroup of $G/N$ is a linear character.

We claim that $\hat\mu$ is a class function on the set of $\pi'$-elements of $G/N$.
To see this, observe that $\varphi$ and $\varphi'$ are class functions on $G$,
and thus $\mu_{x^y}=(\mu_x)^y$ for all $y\in G/N$. It follows that
$$\hat\mu(x^y)=\mu_{x^y}(x^y)=(\mu_x)^y(x^y)=\mu_x(x)=\hat\mu(x),$$
as desired.

Now, for each $y\in G/N$, we define $\lambda(y)=\hat\mu(y_{\pi'})$.
Then $\lambda$ is clearly a class function on $G/N$.
We must prove that $\lambda$ is a generalized character.
Let $E$ be a nilpotent subgroup of $G/N$ and let $\eta\in\Irr(E)$.
Then we can write $E=A\times B$ and $\eta=\alpha\times\beta$ with $\alpha\in\Irr(A)$ and $\beta\in\Irr(B)$,
where $A$ is a $\pi$-subgroup and $B$ is a $\pi'$-subgroup. Thus we have
$$[\lambda_E,\eta]=\frac{1}{|E|}\sum_{y\in E}\hat\mu(y_{\pi'})\overline{\eta(y)}
=\frac{1}{|A|}\sum_{a\in A}\overline{\alpha(a)}
\frac{1}{|B|}\sum_{b\in B}\hat\mu(b)\overline{\beta(b)}
=[1_A,\alpha][\hat\mu_B,\beta],$$
which is an integer because we have shown that $\hat\mu_B$ is a linear character for each $\pi'$-subgroup $B$ of $G/N$.
This proves that $\lambda$ is a generalized character of $G/N$, as wanted.
Also, we see that $\lambda(1)=1$ and that $\hat\mu(x)$ is a root of unity for all $\pi'$-elements $x$ of $G/N$, so
$$[\lambda,\lambda]=\frac{1}{|G/N|}\sum_{y\in G/N}|\hat\mu(y_{\pi'})|^2=1,$$
which shows that $\lambda$ is a linear character of $G/N$.

Finally, for each $g\in G$ with $g_\pi\in N$, writing $x=Ng=Ng_{\pi'}$,
we have
$$\varphi'(g)=\varphi'_{N_x}(g)=\mu_x(x)\varphi_{N_x}(g)=\hat\mu(x)\varphi(g)=\hat\mu(Ng_{\pi'})\varphi(g)
=\lambda(Ng)\varphi(g),$$
and the result follows.
\end{proof}

In the case where $N$ is a $\pi$-group, we can construct a normalized $\pi$-quasi extension of $\theta$ to $G$
in a similar way used in \cite{N2000} (with the prime set $\pi$ in place of the prime $p$).
For each $g\in G$ with $g_\pi\in N$, since $N\langle g\rangle/N$ is a $\pi'$-group,
Gallagher's theorem guarantees that $\theta$ has a unique extension $\theta_g\in\Irr(N\langle g\rangle)$
such that $o(\theta)=o(\theta_g)$, that is, $\theta_g$ is the canonical extension of $\theta$. We define $\hat\theta(g)=\theta_g(g)$,
and if necessary, we let $\hat\theta(x)=0$ for $x\in G$ with $x_\pi\notin N$,
so that $\hat\theta$ is a complex-valued function defined on $G$.

The following generalizes Lemma 4.1 of \cite{N2000}.

\begin{thm}
Let $(G,N,\theta)$ be a character triple with $N$ a $\pi$-group.

{\rm (a)} $\hat\theta$ is a class function of $G$.

{\rm (b)} If $H/N$ is a $\pi'$-subgroup of $G/N$, then $\hat\theta_H$ is the canonical extension of $\theta$.
In particular, $\hat\theta$ is a normalized $\pi$-quasi extension of $\theta$ to $G$.

{\rm (c)} $\hat\theta$ is never zero on each $\pi'$-element of $G$.
\end{thm}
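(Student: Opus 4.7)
The plan is to handle the three parts in order, relying throughout on the fact that for any $g\in G$ with $g_\pi\in N$ the quotient $N\langle g\rangle/N$ is a $\pi'$-group, so by Gallagher's theorem the canonical extension $\theta_g$ of $\theta$ exists and is unique (using that $o(\theta)$ divides $|N|$, a $\pi$-number, and is thus coprime to $|N\langle g\rangle/N|$). Parts (a) and (b) will be direct consequences of this uniqueness, while (c) will require invoking the Glauberman correspondence.

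For (a), I would verify $\hat\theta(g^h)=\hat\theta(g)$ for all $g,h\in G$. If $g_\pi\notin N$, normality of $N$ forces $(g^h)_\pi=(g_\pi)^h\notin N$, and both sides vanish. If $g_\pi\in N$, then $(\theta_g)^h\in\Irr(N\langle g^h\rangle)$ is an extension of $\theta^h=\theta$ with determinantal order $o(\theta_g)=o(\theta)$, so uniqueness of the canonical extension forces $(\theta_g)^h=\theta_{g^h}$, whence $\hat\theta(g^h)=(\theta_g)^h(g^h)=\theta_g(g)=\hat\theta(g)$. For (b), let $H/N$ be a $\pi'$-subgroup. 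Since $\gcd(|H/N|,o(\theta))=1$, Gallagher's theorem yields a unique canonical extension $\theta^*\in\Irr(H)$ of $\theta$ with $o(\theta^*)=o(\theta)$. For each $g\in H$ (automatically with $g_\pi\in N$), the restriction $\theta^*|_{N\langle g\rangle}$ is an extension of $\theta$ whose determinantal order divides $o(\theta^*)=o(\theta)$; it is also at least $o(\theta)$ (any extension of $\theta$ has determinantal order divisible by $o(\theta)$), so it equals $o(\theta)$. By uniqueness again, $\theta^*|_{N\langle g\rangle}=\theta_g$, and evaluation gives $\theta^*(g)=\theta_g(g)=\hat\theta(g)$. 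Therefore $\hat\theta|_H=\theta^*$ is the canonical extension of $\theta$ to $H$, in particular an honest character extending $\theta$, so $\hat\theta$ is a normalized $\pi$-quasi extension of $\theta$ to $G$.

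For (c), fix a $\pi'$-element $g\in G$. Then $g_\pi=1$ and $N\cap\langle g\rangle=1$, so $H=N\langle g\rangle=N\rtimes\langle g\rangle$ with $\gcd(|N|,|g|)=1$, and by (b) we have $\hat\theta|_H=\theta_g$. The cyclic group $\langle g\rangle$ acts coprimely on $N$, so the Glauberman correspondence provides a canonical bijection from $\Irr_{\langle g\rangle}(N)$ onto $\Irr(C_N(g))$; let $\theta^*\in\Irr(C_N(g))$ denote the correspondent of $\theta$. The classical value formula for canonical extensions in coprime cyclic actions (see for instance Chapter~13 of \cite{I1976}, or the corresponding discussion in \cite{N2018}) then yields $\theta_g(g)=\pm\theta^*(1)$, which is a nonzero integer. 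This last step is the main obstacle: parts (a) and (b) are routine bookkeeping with canonical extensions, but the nonvanishing in (c) cannot be forced from the determinantal and dimension constraints alone --- small explicit sums of roots of unity satisfying the analogous arithmetic conditions can indeed vanish --- and the essential structural input comes from the Glauberman correspondence.
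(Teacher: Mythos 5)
Your proposal is correct and follows essentially the same route as the paper: (a) and (b) by the uniqueness of the canonical extension under conjugation and restriction, and (c) by the nonvanishing of the canonical extension on the coprime cyclic complement, which is exactly the paper's citation of Theorem 13.6 of \cite{I1976} (the Glauberman-correspondence value formula you describe). No gaps; you merely spell out the determinantal-order bookkeeping that the paper leaves as ``clearly.''
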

\begin{proof}
(a)  It suffices to show that $\hat\theta$ is a class function of $G^0=\{g\in G|g_\pi\in N\}$.
Let $x,y\in G^0$ be such that $y=x^g$ for some $g\in G$.
Then $(\theta_x)^g$ is also a canonical extension of $\theta$ to $N\langle y\rangle$,
and the uniqueness of such an extension guarantees that $\theta_{y}=(\theta_{x})^{g}$.
Thus we have
$$\hat{\theta}(y)=\theta_{x^g}(x^g)=(\theta_{x})^{g}(x^{g})=\theta_{x}(x)=\hat{\theta}(x),$$
and the result follows.

(b) Let $\varphi\in\Irr(H)$ be the canonical extension of $\theta$ to $H$,
and we need to prove that $\hat{\theta}_H=\varphi$.
Fix an arbitrary element $x\in H$, and let $X=N\langle x\rangle$.
Then $\varphi_X$ is clearly the canonical extension of $\theta$ to $X$,
and the uniqueness implies that $\theta_x=\varphi_X$.
It follows that $\hat\theta(x)=\theta_x(x)=\varphi(x)$, and we have $\hat\theta_H=\varphi$,
as needed.

(c) Let $g\in G$ be a $\pi'$-element, and write $H=N\langle g\rangle$.
Then $H/N$ is a $\pi'$-group,
and by (b), we see that $\hat\theta_H$ is the canonical extension of $\theta$ to $H$.
In this case, it is well known that $\hat\theta(g)$ cannot be zero (see for instance \cite[Theorem 13.6]{I1976}).
\end{proof}

\section{Proof of Theorem D}

We begin with a useful result.

\begin{lem}\label{alg-int}
Let $(G,N,\theta)$ be a character triple, and let $\chi\in\Irr(G|\theta)$.
If $x\in G$, then
$$\frac{\theta(1)|G:C_G(Nx)|\chi(x)}{\chi(1)}$$
is an algebraic integer, where $C_G(Nx)=\{g\in G|[g,x]\in N\}$, the full preimage of $C_{G/N}(Nx)$ in $G$.
\end{lem}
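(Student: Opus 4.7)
My plan is to lift the problem to the central extension $\hat G$ supplied by Lemma~\ref{triple}, where the cohomology obstruction of $\theta$ is trivialized, and then factor $\chi$ using Gallagher's theorem so that the two classical integrality statements can be invoked on the two factors separately.

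To begin, I would fix a projective representation $\X$ of $G$ associated with $\theta$ whose factor set takes values in a cyclic group $Z$ of $n$-th roots of unity (with $n$ the order of $[\theta]_{G/N}$), and form $\hat G$ as in Lemma~\ref{triple}. This provides the surjection $\rho:\hat G\to G$ with kernel $Z$, the normal subgroup $\hat N=N\times Z\NM\hat G$, and the character $\tau\in\Irr(\hat G)$ extending $\hat\theta=\theta\times 1_Z$. The lift $\hat\chi:=\chi\circ\rho\in\Irr(\hat G)$ is trivial on $Z$ and satisfies $\hat\chi_{\hat N}=(\chi(1)/\theta(1))\hat\theta$, so it lies over $\hat\theta$.

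Next, since $\tau$ extends $\hat\theta$, Gallagher's theorem yields a unique $\beta\in\Irr(\hat G/\hat N)$ with $\hat\chi=\beta\tau$. Via the isomorphism $\hat G/\hat N\cong G/N$ induced by $\rho$, I can regard $\beta$ as an irreducible character of $G/N$ with $\beta(1)=\chi(1)/\theta(1)$. Fixing the lift $\tilde x=(x,1)\in\hat G$ then gives $\chi(x)=\hat\chi(\tilde x)=\beta(Nx)\tau(\tilde x)$, and therefore
$$\frac{\theta(1)|G:C_G(Nx)|\chi(x)}{\chi(1)}=\frac{|G:C_G(Nx)|\beta(Nx)}{\beta(1)}\cdot\tau(\tilde x).$$

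Finally, I would invoke the classical algebraic integrality of central characters separately on each factor. The identity $C_G(Nx)/N=C_{G/N}(Nx)$ shows that $|G:C_G(Nx)|$ is the size of the $G/N$-conjugacy class of $Nx$, so the first factor on the right is the value of the central character of $\beta\in\Irr(G/N)$ on a class sum and is therefore an algebraic integer. The second factor $\tau(\tilde x)$ is an ordinary character value of $\tau\in\Irr(\hat G)$ and hence also an algebraic integer. Their product is the desired quantity, proving the claim. The only step of real substance is the passage to $\hat G$, where trivializing the cohomology $[\theta]_{G/N}$ makes the Gallagher factorization $\hat\chi=\beta\tau$ available; once that split is in hand, the argument reduces to two applications of the classical integrality theorem, one in $G/N$ and one in $\hat G$.
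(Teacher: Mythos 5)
The factorization at the heart of your argument is not available. By Lemma~\ref{triple}(c),(d), $\tau$ extends $\theta$ (viewed on $N=N\times 1$), but its restriction to $\hat N$ is $\hat\lambda^{-1}\hat\theta$, \emph{not} $\hat\theta$; indeed $\tau_Z=\theta(1)(\hat\lambda^{-1})_Z$. Consequently every character of the form $\beta\tau$ with $\beta\in\Irr(\hat G/\hat N)$ lies over $\hat\lambda^{-1}\hat\theta$, whereas $\hat\chi=\chi\circ\rho$ lies over $\hat\theta$ (it is trivial on $Z$). Comparing restrictions to $Z$ in a putative equality $\hat\chi=\beta\tau$ gives $\chi(1)1_Z=\beta(1)\theta(1)(\hat\lambda^{-1})_Z$, which forces $\hat\lambda_Z=1_Z$, i.e.\ $Z=1$ and $[\theta]_{G/N}=1$. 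So your Gallagher step only works in the case where $\theta$ already extends to $G$, and the identification of $\beta$ with a character of $G/N$ --- which is exactly what licenses your central-character estimate using the class of $Nx$ in $G/N$ --- collapses in general.

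The argument can be repaired, but it needs a further idea. Applying Gallagher to the normal subgroup $N$ of $\hat G$ (where $\tau$ \emph{does} extend $\theta$) gives $\hat\chi=\beta\tau$ with $\beta\in\Irr(\hat G/N)$, and the restriction computation above shows that $\beta$ lies over the faithful linear character $\hat\lambda$ of the central subgroup $\hat N/N\cong Z$. Now the class of $N\tilde x$ in $\hat G/N$ may be strictly larger than the class of $Nx$ in $G/N$: its centralizer $C_{\hat G/N}(N\tilde x)$ is contained in, but need not equal, the preimage $H^*$ of $C_{G/N}(Nx)$. The central character of $\beta$ only gives that $|\hat G/N:C_{\hat G/N}(N\tilde x)|\,\beta(N\tilde x)/\beta(1)$ is an algebraic integer, which is weaker than what you need by the factor $k=|H^*:C_{\hat G/N}(N\tilde x)|$. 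The missing observation is this: if $k>1$, then some element of $H^*$ conjugates $N\tilde x$ to $zN\tilde x$ with $1\neq z\in\hat N/N$, and since $\beta$ lies over the faithful $\hat\lambda$ this yields $\beta(N\tilde x)=\hat\lambda(z)\beta(N\tilde x)$, hence $\beta(N\tilde x)=0$, so $\chi(x)=0$ and the statement is trivial; if $k=1$, the two class sizes agree and your central-character argument applies verbatim. With that case distinction added your approach goes through; the paper itself simply quotes Theorem 5.19 of \cite{N2018}, whose proof proceeds along these corrected lines.
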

\begin{proof}
This is precisely Theorem 5.19 of \cite{N2018}.
\end{proof}

We need a generalization of Problem 8.12 of \cite{I1976}.
Actually, what we need in the proof of Theorem D is only Theorem \ref{re}(c) below,
but we prove more for possible use in future.

\begin{thm}\label{re}
Let $(G,N,\theta)$ be a character triple with $N$ a $\pi$-group for a set $\pi$ of primes,
and let $\chi\in\rdz_\pi(G|\mu)$, where $\mu\in\Irr(N)$ is $G$-invariant.
Define the function $\psi$ on $G$ by
$$\psi(g)=\left\{
\begin{array}{cc}
\theta(g_{\pi})\chi(g_{\pi'}), & \text{if}\; g_{\pi}\in N,\\
0,& \text{otherwise}.
\end{array}
\right.$$
Then the following hold.

{\rm (a)} $\psi$ is a generalized character of $G$.

{\rm (b)} If $G=NC_G(N)$ and $\chi\in\dz_\pi(G/N)$,
then $\psi\in\Irr(G)$, and in particular, $\psi\in\rdz_\pi(G|\theta)$.

{\rm (c)} If $N\le Z(G)$, then $\chi\mapsto\psi$ defines a bijection
$\rdz_\pi(G|\mu)\to \rdz_\pi(G|\theta)$.
\end{thm}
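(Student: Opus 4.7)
For part (a), I would invoke Brauer's characterization of generalized characters: it suffices to check that $\psi|_E$ is a generalized character for each elementary subgroup $E\le G$. Writing $E=A\times B$ with $A$ a $\pi$-subgroup and $B$ a $\pi'$-subgroup (possible because elementary subgroups are nilpotent) and setting $M=N\cap A\trianglelefteq A$, a direct unwinding using the $G$-invariance of $\theta$ gives
$$\psi|_E = \frac{1}{[A:M]}\bigl((\theta|_M)\times(\chi|_B)\bigr)^E.$$
By Frobenius reciprocity, the task reduces to showing $[A:M]\mid [\theta|_M,\alpha|_M]_M\cdot[\chi|_B,\beta]_B$ for all $\alpha\in\Irr(A)$ and $\beta\in\Irr(B)$. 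The hard part will be extracting this divisibility from the relative $\pi$-defect zero hypothesis on $\chi$ via Lemma \ref{alg-int}, handling the $p$-adic valuations at each prime $p\in\pi$ dividing $[A:M]$.

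Given (a), part (b) follows from an inner product computation. Under $G=NC_G(N)$, every $\pi'$-element $v$ of $G$ lies in $C_G(N)$: writing $v=nc$ with $n\in N$ and $c\in C_G(N)$ commuting, the condition $v_\pi=nc_\pi=1$ forces $n=c_\pi^{-1}\in Z(N)$ and hence $v=c_{\pi'}\in C_G(N)$. Moreover each $\pi'$-element $\bar v\in(G/N)_{\pi'}$ admits a unique $\pi'$-preimage in $G$ by the standard coprime-lifting argument. Combined with the orthogonality identities $\sum_{n\in N}|\theta(n)|^2=|N|$ and $\sum_{\bar v\in(G/N)_{\pi'}}|\chi(\bar v)|^2=|G/N|$ (the latter using that $\chi\in\dz_\pi(G/N)$ vanishes on $\pi$-singular elements), the bijection $g\leftrightarrow(g_\pi,g_{\pi'})$ yields $[\psi,\psi]_G=|N|\cdot|G/N|/|G|=1$. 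Since $\psi(1)=\theta(1)\chi(1)>0$ and $\psi$ is a generalized character by (a), one concludes $\psi\in\Irr(G)$; verifying $\psi|_N=\chi(1)\theta$ and $(\psi(1)/\theta(1))_\pi=|G/N|_\pi$ gives $\psi\in\rdz_\pi(G|\theta)$.

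For part (c), with $N\le Z(G)$ the character $\mu$ is linear, and Schur's lemma gives $\chi(ng)=\mu(n)\chi(g)$ for all $n\in N$, $g\in G$. Computing $[\psi,\psi]_G$ via the bijection (with $|\theta(g_\pi)|=1$ since $\theta$ is also linear) and splitting $\sum_g|\chi(g)|^2=|G|$ by the $\pi$-part of $g$ produces $|N|\sum_{v\in G_{\pi'}}|\chi(v)|^2+R=|G|$, where $R\ge 0$ collects the contribution from $g$ with $g_\pi\notin N$. Hence $[\psi,\psi]=1-R/|G|$ is a non-negative integer at most $1$; since $\psi(1)=\chi(1)>0$, necessarily $[\psi,\psi]=1$ with $R=0$, which simultaneously yields the vanishing $\chi(g)=0$ for $g_\pi\notin N$ and the conclusion $\psi\in\Irr(G)$. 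The relative defect-zero condition $\psi\in\rdz_\pi(G|\theta)$ follows from $\psi|_N=\chi(1)\theta$ and a degree count. Bijectivity is established by constructing the symmetric inverse $\psi\mapsto\chi'$, where $\chi'(g)=\mu(g_\pi)\psi(g_{\pi'})$ for $g_\pi\in N$ and $\chi'(g)=0$ otherwise; the compositions reduce to the identity using the vanishing property together with the Schur identity $\chi(g)=\mu(g_\pi)\chi(g_{\pi'})$ for $g_\pi\in N$.
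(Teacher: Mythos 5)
Your setup for part (a) matches the paper's (the decomposition $E=A\times B$, $M=N\cap A$, and the identity $[\psi_E,\xi]=\frac{1}{[A:M]}[\theta_M,\alpha_M]\,[\chi_B,\beta]$ are all correct), but the decisive step is exactly the one you defer: ``the hard part will be extracting this divisibility \dots via Lemma~\ref{alg-int}'' is a statement of the problem, not a proof, and part (a) is the heart of the theorem. The argument that is actually needed is not a prime-by-prime estimate of $p$-adic valuations of $[\chi_B,\beta]$; it is a two-sided denominator argument. From your displayed identity, $[A:M][\psi_E,\xi]\in\mathbb{Z}$, so the denominator of the rational number $[\psi_E,\xi]$ is a $\pi$-number. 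For the complementary bound, observe that for $x\in B$ one has $NA\le C_G(Nx)$ (since $A$ centralizes $B$), and substitute $\chi(x)=\frac{\chi(1)}{\mu(1)\,|G:C_G(Nx)|}\,\omega(x)$, where $\omega(x)$ is the algebraic integer of Lemma~\ref{alg-int}. This rewrites $[\psi_E,\xi]$ so that $\frac{\mu(1)|G:N|}{\chi(1)}\,|B|\,[\psi_E,\xi]$ is an algebraic integer, hence (being rational) an ordinary integer; and the hypothesis $\chi\in\rdz_\pi(G|\mu)$ enters precisely here, since $\chi(1)/\mu(1)$ divides $|G:N|$ and has full $\pi$-part, so $\frac{\mu(1)|G:N|}{\chi(1)}|B|$ is a $\pi'$-number. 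The denominator of $[\psi_E,\xi]$ is therefore both a $\pi$-number and a $\pi'$-number, whence $[\psi_E,\xi]\in\mathbb{Z}$. Without this (or an equivalent) argument, (a) is unproven, and the irreducibility conclusions in (b) and (c) collapse with it, since both rely on $\psi$ being a generalized character to convert the inner-product computation into $\psi\in\Irr(G)$.

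Parts (b) and (c) otherwise follow the paper's line and are essentially sound. In (b), your exact evaluation $[\psi,\psi]=1$ via the vanishing of a $\pi$-defect zero character on $\pi$-singular elements is a legitimate variant (the paper only needs the inequality $[\psi,\psi]\le 1$ together with (a)); but note that the ``unique $\pi'$-preimage'' of a $\pi'$-element of $G/N$ is not a consequence of standard coprime lifting alone --- in general the lifts form an $N$-conjugacy class (e.g.\ $S_3$ over $A_3$) --- it holds here only because you have already shown that every $\pi'$-element of $G$ lies in $C_G(N)$, so $N$-conjugation fixes the lift; you should say so explicitly. Part (c) is correct as argued, modulo its dependence on (a).
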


\begin{proof}
(a) Let $E$ be a nilpotent subgroup of $G$, and let $\xi\in\Irr(E)$.
By Brauer' generalized character theorem (see Theorem 8.4 of \cite{I1976}), it suffices to show that $[\psi_{E},\xi]\in \mathbb{Z}$.

To do this, we can write $E=A\times B$ where $A$ is a $\pi$-group and $B$ is a $\pi'$-group,
so that $\xi=\alpha\times \beta$ with $\alpha\in\Irr(A)$ and $\beta\in\Irr(B)$.
Let $D=A\cap N$. Notice that $\psi_{E}$ is zero off $DB$ by definition and that $\psi_{DB}=\theta_{D}\times\chi_{B}$.
Hence, we have
$$[\psi_{E},\xi]=\frac{|DB|}{|E|}[\psi_{DB},\xi_{DB}]=\frac{|D|}{|A|}[\theta_{D}\times\chi_{B},\alpha_{D}\times \beta],$$
and it follows that $|A:D|[\psi_{E},\xi]\in \mathbb{Z}$. In particular, $[\psi_{E},\xi]\in \mathbb{Q}$.

For any $x\in G$, we put
$$\omega(x)=\frac{\mu(1)|G:C_G(Nx)|\chi(x)}{\chi(1)},$$
which is an algebraic integer by Lemma \ref{alg-int}.
Notice that if $x\in B$, then $NA \le C_{G}(Nx)$, and we have
\begin{eqnarray*}
[\psi_{E},\xi] &=& \frac{|D|}{|A|}[\theta_{D}\times\chi_{B},\alpha_{D}\times \beta]\\
&=& \frac{|D|}{|A|}[\theta_{D},\alpha_{D}][\chi_{B},\beta]\\
&=&\frac{|D|}{|A||B|} [\theta_{D},\alpha_{D}] \sum\limits_{x\in B}\chi(x)\overline{\beta(x)}\\
&=& \frac{|D|}{|A||B|} [\theta_{D},\alpha_{D}]
\sum\limits_{x\in B}\frac{\chi(1)}{\mu(1)|G:C_G(Nx)|}\omega(x)\overline{\beta(x)}\\
&=& \frac{\chi(1)|N|}{\mu(1)|G||B|}[\theta_{D},\alpha_{D}]\sum\limits_{x\in B}|C_{G}(Nx):NA|\omega(x)\overline{\beta(x)},
\end{eqnarray*}
This shows that $\frac{\mu(1)|G:N|}{\chi(1)}|B|[\psi_{E},\xi]$ is an algebraic integer,
and since we have seen that this number is also rational, so it must be an integer.
Now, we see that $\frac{\mu(1)|G:N|}{\chi(1)}$ is a $\pi'$-number,
and that $|A:D|$ is a $\pi$-number.
It follows that $[\psi_{E},\xi]\in \mathbb{Z}$, as wanted.

(b) Let $G^0=\{x\in G|x_\pi\in N\}$.
Then $G^0$ is the full preimage in $G$ of the set ${\bar G}^0$ of
the $\pi'$-elements in $\bar G=G/N$.
For each $x\in G^0$, we have $Nx=Nx_{\pi'}$ and thus $\chi(x_{\pi'})=\chi(Nx_{\pi'})=\chi(\bar x)$.
Also, observe that $x_{\pi'}\in C_G(N)$, so $y_{\pi}\in N$ for each $y\in Nx$,
and it follows that
$$\sum_{y\in Nx}|\theta(y_\pi)|^2=\sum_{a\in N}|\theta(a)|^2=|N|[\theta,\theta]=|N|.$$
Now $\psi(1)=\theta(1)\chi(1)>0$, and we obtain
\begin{eqnarray*}
[\psi,\psi] &=& \frac{1}{|G|}\sum\limits_{x\in G}|\psi(x)|^{2} \\
               &=& \frac{1}{|G|}\sum\limits_{x\in G^0}|\theta(x_{\pi})|^2|\chi(x_{\pi'})|^{2} \\
               &=& \frac{1}{|G|}\sum\limits_{\bar x\in {\bar G}^0}|\chi(\bar x)|^{2}\sum_{y\in Nx}|\theta(y_{\pi})|^2  \\
               &=& \frac{|N|}{|G|}\sum\limits_{\bar x\in {\bar G}^0}|\chi(\bar x)|^{2}
               \le [\chi,\chi]=1.
\end{eqnarray*}
This proves that $\psi\in\Irr(G)$. In this case, since $\psi_N=\chi(1)\theta$,
we have
$$(\psi(1)/\theta(1))_\pi=\chi(1)_\pi=|G:N|_\pi,$$
and thus $\psi\in\rdz_\pi(G|\theta)$.

(c) In this case, we see that both $\theta$ and $\mu$ are linear, and thus for each $g\in G$ with $g_\pi\in N$,
we have $\chi(g)=\mu(g_\pi)\chi(g_{\pi'})$, so $|\chi(g)|=|\chi(g_{\pi'})|$.
As in (b), we get
\begin{eqnarray*}
[\psi,\psi] &=& \frac{1}{|G|}\sum\limits_{g\in G}|\psi(g)|^{2} \\
               &=& \frac{1}{|G|}\sum\limits_{g\in G^0}|\theta(g_{\pi})|^2|\chi(g_{\pi'})|^{2} \\
               &=& \frac{1}{|G|}\sum\limits_{g\in G^0}|\chi(g)|^{2}\le [\chi,\chi]=1,
\end{eqnarray*}
and since $\psi(1)>0$, equality holds everywhere.
In particular, $\psi\in\Irr(G)$ and $\chi$ vanishes on elements of $G-G^0$.
By the same argument used in (b), we deduce that $\psi\in\rdz_\pi(G|\theta)$,
so the map $\chi\mapsto\psi$ is well defined.

What remains is to show that this map is bijective.
To do this, we can interchange the roles of $\mu$ and $\theta$ to define the map
$\rdz_\pi(G|\theta)\to \rdz_\pi(G|\mu),\psi\mapsto\chi'$ by setting
$$\chi'(g)=\left\{
\begin{array}{cc}
\mu(g_{\pi})\psi(g_{\pi'}), & \text{if}\; g_{\pi}\in N,\\
0,& \text{otherwise}.
\end{array}
\right.$$
For each $g\in G$, we see that $\chi(g)=0=\chi'(g)$ if $g_\pi\notin N$,
and in the remaining case where $g_\pi\in N$, by definition we have
$$\chi'(g)=\mu(g_{\pi})\psi(g_{\pi'})=\mu(g_\pi)\theta(1)\chi(g_{\pi'})=\mu(g_\pi)\chi(g_{\pi'})=\chi(g).$$
This proves that $\chi=\chi'$, and thus the map $\chi\mapsto\psi$ is invertible.
The result now follows.
\end{proof}

As an immediate consequence, we can prove the following, which is Theorem D from the introduction.

\begin{cor}\label{D}
Let $(G,N,\theta)$ be a character triple. If the order of $[\theta]_{G/N}\in H^2(G/N,\mathbb{C}^\times)$ is a $\pi$-number
for a prime set $\pi$, then $|\dz_\pi(G/N)|=|\rdz_\pi(G|\theta)|$.
\end{cor}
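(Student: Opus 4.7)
The plan is to reduce to a central character triple using Corollary~\ref{tri}, where Theorem~\ref{re}(c) becomes directly applicable with the trivial character on the central subgroup. Concretely, I would replace $(G,N,\theta)$ by the isomorphic character triple $(G^*,N^*,\theta^*)$ supplied by Corollary~\ref{tri}, in which $N^*\le Z(G^*)$ is cyclic of order $n$, the order of $[\theta]_{G/N}$. Since $n$ is a $\pi$-number by hypothesis, $N^*$ is in particular a central $\pi$-subgroup of $G^*$.

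Next, I would transport the two quantities to be compared across this isomorphism. A character triple isomorphism comes equipped with a canonical isomorphism $G/N\cong G^*/N^*$ together with a bijection $\Irr(G\mid\theta)\to\Irr(G^*\mid\theta^*)$ preserving the degree ratio $\chi(1)/\theta(1)$. Because $|G/N|_\pi=|G^*/N^*|_\pi$, the relative $\pi$-defect zero condition $(\chi(1)/\theta(1))_\pi=|G/N|_\pi$ is preserved, giving a bijection $\rdz_\pi(G|\theta)\to\rdz_\pi(G^*|\theta^*)$. The same quotient isomorphism also yields $|\dz_\pi(G/N)|=|\dz_\pi(G^*/N^*)|$. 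Hence the task reduces to proving $|\dz_\pi(G^*/N^*)|=|\rdz_\pi(G^*|\theta^*)|$.

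For the reduced problem, I would apply Theorem~\ref{re}(c) to $(G^*,N^*,\theta^*)$ with $\mu=1_{N^*}$: the hypotheses $N^*\le Z(G^*)$ and $N^*$ a $\pi$-group are both in force, so the theorem furnishes a bijection $\rdz_\pi(G^*|1_{N^*})\to\rdz_\pi(G^*|\theta^*)$. Since $\rdz_\pi(G^*|1_{N^*})=\dz_\pi(G^*/N^*)$, as noted in the introduction, combining the bijections delivers the claim. The only point requiring care is the transfer of the $\pi$-defect zero condition through the character triple isomorphism, but this is immediate from the preservation of degree ratios together with the group isomorphism $G/N\cong G^*/N^*$; no further substantial work is needed beyond the two results cited.
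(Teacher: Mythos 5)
Your proposal is correct and follows essentially the same route as the paper: reduce via Corollary~\ref{tri} to an isomorphic triple with a central cyclic $\pi$-subgroup, transfer the counts through the character triple isomorphism (which preserves the degree ratios $\chi(1)/\theta(1)$ and induces $G/N\cong G^*/N^*$), and then apply Theorem~\ref{re}(c) with $\mu=1_{N^*}$. The paper's proof is just a terser version of the same argument, leaving the transfer step implicit.
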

\begin{proof}
By Lemma \ref{tri}, we can assume that $N$ is a central $\pi$-subgroup of $G$,
and the result follows by Theorem \ref{re}(c) (with $\mu=1_N$).
\end{proof}

\section{Proof of Theorem C}

We are now ready to prove Theorem C in the introduction, which we restate here for convenience.

\begin{thm}\label{qua-cor}
Let $(G,N,\theta)$ be a character triple,
and suppose that $\tilde\theta$ is a $\pi$-quasi extension of $\theta$ to $G$,
where $\pi$ is a set of primes.
Then the map $\chi\mapsto \tilde{\theta}\chi$ defines a bijection from $\dz_\pi(G/N)$ onto $\rdz_\pi(G|\theta)$.
\end{thm}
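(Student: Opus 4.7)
The plan is to reduce to Theorem~\ref{re}(c).

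First, by the observation at the start of this section, $\tilde\theta\chi = \tilde\theta_n\chi$ for every $\chi \in \dz_\pi(G/N)$, since such a $\chi$ vanishes on elements $g$ with $g_\pi \notin N$ (the coset $Ng$ then has nontrivial $\pi$-part in $G/N$). So I may assume $\tilde\theta$ is normalized. By Theorem~B, any two normalized $\pi$-quasi extensions of $\theta$ to $G$ differ by a linear character $\lambda \in \Irr(G/N)$; multiplication by $\lambda$ permutes $\dz_\pi(G/N)$, so it suffices to prove the theorem for one conveniently chosen normalized $\pi$-quasi extension. I take the explicit $\tilde\theta_0(g) = \hat\lambda(x_\pi)\tau(x)$ (for $g_\pi \in N$, with $x \in \hat G$ a preimage of $g$) built in the proof of Theorem~A.

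Next, I pass to the isomorphic central triple $(G^*,N^*,\theta^*) = (\hat G/N, \hat N/N, \hat\lambda)$ given by Lemma~\ref{triple}(e): here $N^* \le Z(G^*)$ is cyclic of order $n$ (the $\pi$-order of $[\theta]_{G/N}$) and $\theta^*$ is faithful linear. The character triple isomorphism provides bijections $\dz_\pi(G/N) \leftrightarrow \dz_\pi(G^*/N^*)$ (via $G/N \cong \hat G/\hat N$) and $\rdz_\pi(G|\theta) \leftrightarrow \rdz_\pi(G^*|\theta^*)$, explicitly given by $\chi \leftrightarrow \chi^*$ with $\tilde\chi = \tau\chi^*$, where $\tilde\chi$ is the inflation of $\chi$ along $\rho:\hat G \to G$.

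In $G^*$ I construct the canonical normalized $\pi$-quasi extension $\hat{\theta^*}$ of $\theta^*$ by the canonical extension construction of Section~3, applicable since $N^*$ is a $\pi$-group. I verify $\hat{\theta^*}(\bar g) = \theta^*(\bar g_\pi)$ whenever $\bar g_\pi \in N^*$: the group $N^*\langle \bar g\rangle$ is abelian, so the canonical extension $\theta^*_{\bar g}$ is linear of $\pi$-order dividing $|N^*|$; its restriction to the $\pi'$-group $\langle \bar g_{\pi'}\rangle$ is therefore trivial, whence $\theta^*_{\bar g}(\bar g) = \theta^*(\bar g_\pi)\theta^*_{\bar g}(\bar g_{\pi'}) = \theta^*(\bar g_\pi)$. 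Applying Theorem~\ref{re}(c) with $\mu = 1_{N^*}$, and noting $\rdz_\pi(G^*|1_{N^*}) = \dz_\pi(G^*/N^*)$, yields a bijection $\dz_\pi(G^*/N^*) \to \rdz_\pi(G^*|\theta^*)$ sending $\chi^*$ to $\psi^*$ with $\psi^*(\bar g) = \theta^*(\bar g_\pi)\chi^*(\bar g_{\pi'})$ when $\bar g_\pi \in N^*$ (and $0$ otherwise). Since each $\chi^* \in \dz_\pi(G^*/N^*)$ is zero on $\pi$-singular cosets and satisfies $\chi^*(\bar g) = \chi^*(\bar g_{\pi'})$ when $\bar g_\pi \in N^*$, this map coincides with $\chi^* \mapsto \hat{\theta^*}\chi^*$.

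Finally, I transfer back to $G$: if $\chi \in \dz_\pi(G/N)$ corresponds to $\chi^*$ and $\psi^* = \hat{\theta^*}\chi^*$ corresponds to $\psi \in \rdz_\pi(G|\theta)$ via $\tilde\psi = \tau\psi^*$, then for $g \in G$ with $g_\pi \in N$ and any preimage $x \in \hat G$ (so $x_\pi \in \hat N$ since $\rho(x_\pi) = g_\pi \in N$),
\[
\psi(g) = \tilde\psi(x) = \tau(x)\hat\lambda(x_\pi)\chi^*(x) = \tilde\theta_0(g)\chi(g),
\]
while both sides vanish when $g_\pi \notin N$. Hence $\chi \mapsto \tilde\theta_0\chi$ is the transfer of $\chi^* \mapsto \hat{\theta^*}\chi^*$ and therefore a bijection $\dz_\pi(G/N) \to \rdz_\pi(G|\theta)$. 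The hard part will be the bookkeeping in this last step: carefully verifying that the explicit $\tilde\theta_0$ from the proof of Theorem~A matches, under the character triple isomorphism, the canonical $\hat{\theta^*}$ built in the central quotient, using $\rho(x_\pi) = g_\pi$ to translate the condition $g_\pi \in N$ into $\bar x_\pi \in N^*$.
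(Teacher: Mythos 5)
Your proposal is correct, but it follows a genuinely different route from the paper. The paper works directly with the given $\tilde\theta$: it first proves that $\tilde\theta\chi$ is a generalized character by Brauer's characterization (restricting to $NE$ for nilpotent $E$, combining a rationality estimate with an algebraic-integer estimate via the central character $\omega$, and using $\pi$/$\pi'$-coprimality), then computes $[\tilde\theta\chi,\tilde\theta\psi]=[\chi,\psi]$ using $\sum_{y\in Nx}|\tilde\theta(y)|^2=|N|$ to get irreducibility and injectivity, and invokes Theorem D only for surjectivity. You instead reduce, via the normalization remark and Theorem B, to the one explicit normalized quasi-extension $\tilde\theta_0$ built in the proof of Theorem A, pass to the central triple $(\hat G/N,\hat N/N,\hat\lambda)$, identify the Theorem 4.2(c) bijection (with $\mu=1_{N^*}$) with multiplication by the canonical $\hat{\theta^*}$ (your linearity argument that the canonical extension is trivial on the $\pi'$-part is right), and transfer back; the bookkeeping $\psi(g)=\tau(x)\hat\lambda(x_\pi)\chi(g)=\tilde\theta_0(g)\chi(g)$ checks out since $\rho(x_\pi)=g_\pi$ and $(Nx)_\pi=Nx_\pi$. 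What your route buys is that no new Brauer-theorem computation is needed for $\tilde\theta\chi$ in $G$: all integrality work is delegated to Theorem 4.2, and Theorems A and B do the rest. What it costs is reliance on the \emph{explicit} form of the character triple isomorphism of Lemma 2.3(e), namely that corresponding characters satisfy $\hat\psi=\tau\psi^*$ (with $Z$ in the kernel of the product, irreducibility of $\tau\psi^*$, and preservation of the degree ratio $\psi(1)/\theta(1)$); the paper's Lemma 2.3 only asserts the existence of the isomorphism, so you should cite the construction in the proof of Theorem 5.6/Corollary 5.9 of Navarro's book (or verify the tensor-decomposition argument) at that step, and also cite the standard fact (Isaacs, Theorem 8.17) that $\pi$-defect zero characters vanish on $\pi$-singular elements, which you use in the normalization reduction. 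By contrast, the paper's direct argument needs neither Theorem B nor the explicit isomorphism, and yields the additional information that $\chi\mapsto\tilde\theta\chi$ is an isometry for an arbitrary (not necessarily normalized) $\pi$-quasi extension.
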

\begin{proof}
Fix $\chi\in\dz_\pi(G/N)$, and we first show that $\tilde\theta\chi$ is a generalized character of $G$.
By Brauer's theorem (Theorem 8.4 of \cite{I1976}),
it suffices to prove that for each nilpotent subgroup $E\le G$,
$(\tilde\theta\chi)_{NE}$ is a generalized character of $NE$.
To do this, writing $H=NE$, we must show that $[(\tilde\theta\chi)_H,\xi]\in\mathbb{Z}$ for each $\xi\in\Irr(H)$.

Observe that $H/N\cong E/N\cap E$ is nilpotent,
so by Theorem 6.22 of \cite{I1976}, there exist a subgroup $M$ and some character $\eta\in\Irr(M)$
such that $N\le M\le H$, $\xi=\eta^H$ and $\eta_N\in\Irr(N)$.
In this situation, since $M/N$ is also nilpotent, we can write $M/N=A/N\times B/N$,
where $A/N$ is a $\pi$-group and $B/N$ is a $\pi'$-group,
and since $\chi$ has $\pi$-defect zero, it follows that $\chi$ vanishes on $M-B$.
Now, we have
\begin{eqnarray*}
[(\tilde\theta\chi)_H,\xi] &=& [(\tilde\theta\chi)_M,\eta] \\
                                   &=& \frac{1}{|M|}\sum_{x\in B}\tilde\theta(x)\chi(x)\overline{\eta(x)} \\
                                   &=& \frac{1}{|M|}\sum_{\bar x\in B/N}\chi(\bar x) \sum_{y\in Nx} \tilde\theta(y)\overline{\eta(y)}.
\end{eqnarray*}
Note that $B/N$ is a $\pi'$-group, so by definition $\tilde\theta_B$ is an extension of $\theta$,
and since $\eta_B$ is clearly irreducible, it follows by \cite[Lemma 8.14]{I1976} that
$\sum_{y\in Nx} \tilde\theta(y)\overline{\eta(y)}=0$ unless $\eta_N=\theta$.
Thus it suffices to consider those $\xi\in\Irr(H)$ with $\eta_N=\theta$, and in this case,
we see that $\sum_{y\in Nx} |\eta(y)|^2=|N|$, and that both $\tilde\theta_B$ and $\eta_B$ extend $\theta$.
By Gallagher's theorem, we can write $\tilde\theta_B=\lambda\eta_B$ for some character $\lambda\in\Irr(B/N)$,
and we obtain
\begin{eqnarray*}
 [(\tilde\theta\chi)_H,\xi] &=& \frac{1}{|M|}\sum_{\bar x\in B/N}\chi(\bar x) \sum_{y\in Nx} |\eta(y)|^2\lambda(y) \\
                                    &=&  \frac{|N|}{|M|}\sum_{\bar x\in B/N}\chi(\bar x) \lambda(\bar x)\\
                                    &=&  \frac{1}{|M/B|}[\chi_{B/N},\lambda].
\end{eqnarray*}
So  $|M/B| [(\tilde\theta\chi)_H,\xi]$ is an integer,
and in particular, $[(\tilde\theta\chi)_H,\xi]$ is a rational number.

Furthermore, for each $\bar x\in B/N$, we know that $A/N\le C_{G/N}(\bar x)$ and that
$$\omega(\bar x)=\frac{|G/N:C_{G/N}(\bar x)|\chi(\bar x)}{\chi(1)}$$
is always an algebraic integer (see the discussion preceding Theorem 3.7 of \cite{I1976}). Thus,
\begin{eqnarray*}
 [(\tilde\theta\chi)_H,\xi] &=&  \frac{|N|}{|M|}\sum_{\bar x\in B/N}\chi(\bar x) \lambda(\bar x)\\
                                    &=&  \frac{1}{|M/N|}\sum_{\bar x\in B/N}\frac{\chi(1)\omega(\bar x)}{|G/N:C_{G/N}(\bar x)|} \lambda(\bar x)\\
                                    &=& \frac{\chi(1)}{|G/N||B/N|}\sum_{\bar x\in B/N} |C_{G/N}(\bar x):A/N|
                                    \omega(\bar x)\lambda(\bar x),
\end{eqnarray*}
which implies that $\frac{|G/N||B/N|}{\chi(1)}[(\tilde\theta\chi)_H,\xi]$ is also an algebraic integer
and hence a rational integer.
Moreover, notice that $|M/B|=|A/N|$ is a $\pi$-number while $\frac{|G/N||B/N|}{\chi(1)}$ is a $\pi'$-number,
and thus $[(\tilde\theta\chi)_H,\xi]$ is an integer.
This proves that $\tilde\theta\chi$ is a generalized character of $G$, as wanted.

Next, we show that $\tilde\theta\chi\in\Irr(G)$.
As before, we write $G^0=\{x\in G|x_{\pi}\in N\}$, and for each $x\in G^0$,
we have $Nx=Nx_{\pi'}$, so that $N\langle x\rangle/N$ is a $\pi'$-group.
In this case, by definition we know that $\tilde\theta_{N\langle x\rangle}$ extends $\theta$,
and by \cite[Lemma 8.14]{I1976} again, it follows that $\sum_{y\in Nx}|\tilde\theta(y)|^2=|N|$. Thus,
for each $\chi,\psi\in\dz_\pi(G/N)$, we conclude that
\begin{eqnarray*}
 [\tilde\theta\chi,\tilde\theta\psi] &=&  \frac{1}{|G|}\sum_{x\in G^0}|\tilde\theta(x)|^2 \chi(x) \overline{\psi(x)}\\
                                    &=&  \frac{1}{|G|}\sum_{\bar x\in G/N} \chi(\bar x) \overline{\psi(\bar x)}
                                    \sum_{y\in Nx}|\tilde\theta(y)|^2\\
                                    &=& \frac{1}{|G/N|}\sum_{\bar x\in G/N}\chi(\bar x) \overline{\psi(\bar x)}\\
                                    &=& [\chi,\psi].
\end{eqnarray*}
Now, taking $\chi=\psi$, we see that $[\tilde\theta\chi,\tilde\theta\chi]=1$, which means that $\tilde\theta\chi\in\Irr(G)$.
Also, if $\chi\neq \psi$, we know that $[\tilde\theta\chi,\tilde\theta\psi]=0$, that is, $\tilde\theta\chi\neq \tilde\theta\psi$.
This establishes the injectivity of the map $\chi\mapsto\tilde\theta\chi$,
and the surjectivity follows by Theorem D (see Corollary \ref{D}). The proof is now complete.
\end{proof}

\section*{Acknowledgement}
The authors would like to thank Professor Jiping Zhang for his guidance and encouragement.
This work was supported by the NSF of China (No. 12171289).



\end{document}